\newenvironment{customthm}[1]
  {\innercustomthm}
  {\endinnercustomthm}
\newenvironment{custompro}[1]
  {\innercustompro}
  {\endinnercustompro}
\newtheorem*{thm*}{Theorem}
\newtheorem{thm}{Theorem}
\newtheorem{lem}[thm]{Lemma}
\newtheorem{pro}[thm]{Proposition}
\newtheorem{obs}[thm]{Observation}
\newtheorem{cor}[thm]{Corollary}
\newtheorem{ques}[thm]{Question}
\newcommand{\N}{\mathbb{N}}
\newcommand\set[1]{\left\{ #1 \right\}}
\newcommand\floor[1]{\left\lfloor #1 \right\rfloor}
\newcommand\ceil[1]{\left\lceil #1 \right\rceil}
\begin{document}

\title{Proportional 2-Choosability with a Bounded Palette}

\author{Jeffrey A. Mudrock\footnotemark[1], Robert Piechota\footnotemark[1], Paul Shin\footnotemark[1], and Tim Wagstrom\footnotemark[1]}

\footnotetext[1]{Department of Mathematics, College of Lake County, Grayslake, IL 60030.  E-mail:  {\tt {jmudrock@clcillinois.edu}}}

\maketitle

\begin{abstract}

Proportional choosability is a list coloring analogue of equitable coloring.  Specifically, a $k$-assignment $L$ for a graph $G$ specifies a list $L(v)$ of $k$ available colors to each $v \in V(G)$.  An $L$-coloring assigns a color to each vertex $v$ from its list $L(v)$.  A \emph{proportional $L$-coloring} of $G$ is a proper $L$-coloring in which each color $c \in \bigcup_{v \in V(G)} L(v)$ is used $\lfloor \eta(c)/k \rfloor$ or $\lceil \eta(c)/k \rceil$ times where $\eta(c)=\left\lvert{\{v \in V(G) : c \in L(v) \}}\right\rvert$.  A graph $G$ is \emph{proportionally $k$-choosable} if a proportional $L$-coloring of $G$ exists whenever $L$ is a $k$-assignment for $G$.  Motivated by earlier work, we initiate the study of proportional choosability with a bounded palette by studying proportional 2-choosability with a bounded palette.  In particular, when $\ell \geq 2$, a graph $G$ is said to be \emph{proportionally $(2, \ell)$-choosable} if a proportional $L$-coloring of $G$ exists whenever $L$ is a $2$-assignment for $G$ satisfying $|\bigcup_{v \in V(G)} L(v)| \leq \ell$.  We observe that a graph is proportionally $(2,2)$-choosable if and only if it is equitably 2-colorable.  As $\ell$ gets larger, the set of proportionally $(2, \ell)$-choosable graphs gets smaller.  We show that whenever $\ell \geq 5$ a graph is proportionally $(2, \ell)$-choosable if and only if it is proportionally 2-choosable.  We also completely characterize the connected proportionally $(2, \ell)$-choosable graphs when $\ell = 3,4$.  

\medskip

\noindent {\bf Keywords.}  graph coloring, equitable coloring, list coloring.

\noindent \textbf{Mathematics Subject Classification.} 05C15

\end{abstract}

\section{Introduction}\label{intro}

In this paper all graphs are nonempty, finite, simple graphs unless otherwise noted.  Generally speaking we follow West~\cite{W01} for terminology and notation.  The set of natural numbers is $\N = \{1,2,3, \ldots \}$.  For $m \in \N$, we write $[m]$ for the set $\{1, \ldots, m \}$.  If $G$ is a graph and $S \subseteq V(G)$, we use $G[S]$ for the subgraph of $G$ induced by $S$.  We write  $\Delta(G)$ for the maximum degree of a vertex in $G$.  We write $K_{n,m}$ for the complete bipartite graph with partite sets of size $n$ and $m$.  When $G$ is a path on $n$ vertices, $V(G)= \{v_1, \ldots, v_n \}$, and two vertices are adjacent in $G$ if and only if they appear consecutively in the ordering: $v_1, \ldots, v_n$, we say the vertices are written \emph{in order} when we write $v_1, \ldots, v_n$.  When $C$ is a cycle on $n$ vertices ($n \geq 3$ since $C$ is simple), $V(C)= \{v_1, \ldots, v_n \}$, and $E(C) = \{\{v_1,v_2 \}, \{v_2, v_3 \}, \ldots, \{v_{n-1}, v_n \}, \{v_n, v_1 \} \}$, then we say the vertices are written in \emph{cyclic order} when we write $v_1, \ldots, v_n$.  When $G_1$ and $G_2$ are vertex disjoint graphs, we write $G_1 + G_2$ for the disjoint union of $G_1$ and $G_2$.

In 2019 a new notion combining the notions of list coloring and equitable coloring called proportional choosability was introduced~\cite{KM19}.  In this paper, we study proportional choosability with a bounded palette.  We begin by briefly reviewing some important notions.  

\subsection{List Coloring with a Bounded Palette}

Given a graph $G$, in the classical vertex coloring problem we wish to color the elements of $V(G)$ with colors from the set $[k]$ so that adjacent vertices receive different colors, a so-called \emph{proper $k$-coloring}.  We say $G$ is \emph{$k$-colorable} when a proper $k$-coloring of $G$ exists.  The \emph{chromatic number} of $G$, denoted $\chi(G)$, is the smallest $k$ such that $G$ is $k$-colorable.  

List coloring is a variation on classical vertex coloring, and it was introduced independently by Vizing~\cite{V76} and Erd\H{o}s, Rubin, and Taylor~\cite{ET79} in the 1970's.  For list coloring, we associate with a graph $G$ a \emph{list assignment} $L$ that assigns to each vertex $v \in V(G)$ a list $L(v)$ of available colors.  We say $G$ is \emph{$L$-colorable} if there exists a proper coloring $f$ of $G$ such that $f(v) \in L(v)$ for each $v \in V(G)$ (we refer to $f$ as a \emph{proper $L$-coloring} of $G$).  A list assignment $L$ is called a \emph{k-assignment} for $G$ if $|L(v)|=k$ for each $v \in V(G)$.  We say $G$ is \emph{k-choosable} if $G$ is $L$-colorable whenever $L$ is a $k$-assignment for $G$.  

The study of list coloring with a bounded palette began in 2005~\cite{KS05}.  Suppose that $L$ is a list assignment for a graph $G$.  The \emph{palette of colors associated with $L$} is $\cup_{v \in V(G)} L(v)$.  From this point forward, we use $\mathcal{L}$ to denote the palette of colors associated with $L$ whenever $L$ is a list assignment.  Suppose $1 \leq k \leq \ell$.  A list assignment $L$ for a graph $G$ is a \emph{$(k, \ell)$-assignment} for $G$ if $L$ is a $k$-assignment for $G$ and $\mathcal{L} \subseteq [\ell]$.  Notice that if $L$ is a $(k, \ell)$-assignment for $G$, we can view $L$ as a function with domain $V(G)$ and codomain equal to the set of $k$-element subsets of $[\ell]$.  We say $G$ is \emph{$(k, \ell)$-choosable} if $G$ is $L$-colorable whenever $L$ is a $(k, \ell)$-assignment for $G$.  Clearly, a graph is $(k,k)$-choosable if and only if it is $k$-colorable.  In~\cite{DW17} the complexity of $(k, \ell)$-choosability is studied for grids (i.e. the Cartesian product of two paths), subgrids (i.e. induced subgraphs of grids), 3-colorable planar graphs, and triangle-free planar graphs.

In~\cite{KS05} it is shown that for any $k \geq 2$, there is a $C \in \N$ satisfying $C = O(k 16^k \ln k)$ as $k \rightarrow \infty$ such that if $G$ is $(k, 2k-1)$-choosable, then $G$ is $C$-choosable.  In 2015, it was subsequently demonstrated that this constant $C$ must also satisfy $C = \Omega(4^k/\sqrt{k})$ as $k \rightarrow \infty$ (see~\cite{BK15}).  Importantly, results like this show that understanding list coloring with a bounded palette can provide us with information about list coloring in general.  On the other hand, graphs that fail to be $k$-choosable can be $(k , \ell)$-choosable.  Indeed, for each $k$ and $\ell$ satisfying $3 \leq k \leq \ell$, there is a graph $G$ that is $(k, \ell)$-choosable but not $(k, \ell+1)$-choosable (see~\cite{KS05}). 

\subsection{Equitable Coloring and Proportional Choosability}

\subsubsection{Equitable Coloring}

Equitable coloring is another variation on the classical vertex coloring problem that began with a conjecture of Erd\H{o}s in 1964~\cite{E64}.  Equitable coloring was formally defined by Meyer in 1973~\cite{M73}.  Specifically, an \emph{equitable $k$-coloring} of a graph $G$ is a proper $k$-coloring $f$ of $G$ such that the sizes of the color classes differ by at most one (where a proper $k$-coloring has exactly $k$ color classes).  In an equitable $k$-coloring, the color classes associated with the coloring are each of size $\lceil |V(G)|/k \rceil$ or $\lfloor |V(G)|/k \rfloor$.  We say that a graph $G$ is \emph{equitably $k$-colorable} if there exists an equitable $k$-coloring of $G$.  Equitable coloring has been applied in various contexts (for example, see~\cite{JR02, KJ06, P01, T73}).  Furthermore,   in 1970 Hajn\'{a}l and Szemer\'{e}di~\cite{HS70} proved the 1964 conjecture of Erd\H{o}s: every graph $G$ has an equitable $k$-coloring when $k \geq \Delta(G)+1$. 

Unlike classical vertex coloring, increasing the number of colors can make equitable coloring more difficult.  For example, for any $m \in \N$, $K_{2m+1, 2m+1}$ is equitably $2m$-colorable, but it is not equitably $(2m+1)$-colorable.  Moreover, unlike classical vertex coloring, the property of being equitably $k$-colorable is not monotone.  For example, $K_{3,3}$ is equitably 2-colorable, but $K_{1,3}$ is not equitably 2-colorable.  

\subsubsection{Proportional Choosability}

In 2003, Kostochka, Pelsmajer, and West~\cite{KP03} introduced a list version of equitable coloring called equitable choosability which has received quite a bit of attention in the literature.  If $L$ is a $k$-assignment for the graph $G$, a proper $L$-coloring of $G$ is an \emph{equitable $L$-coloring} of $G$ if each color in $\mathcal{L}$ appears on at most $\lceil |V(G)|/k \rceil$ vertices.  We say $G$ is \emph{equitably $k$-choosable} if an equitable $L$-coloring of $G$ exists whenever $L$ is a $k$-assignment for $G$.  While equitable choosability is a useful notion in many contexts, it does not place a lower bound on how many times a color must be used, whereas in an equitable $k$-coloring of $G$ each color must be used at least $\lfloor |V(G)|/k \rfloor$ times.

Kaul, Pelsmajer, Reiniger, and the first author~\cite{KM19} introduced a new list analogue of equitable coloring called proportional choosability which places both an upper and lower bound on how many times a color must be used in a list coloring.  Specifically, suppose that $L$ is a $k$-assignment for a graph $G$.  For each color $c \in \mathcal{L}$, the \emph{multiplicity of $c$ in $L$} is the number of vertices $v$ whose list $L(v)$ contains $c$.  The multiplicity of $c$ in $L$ is denoted by $\eta_L(c)$ (or simply $\eta(c)$ when the list assignment is clear).  So, $\eta_L(c)=\left\lvert{\{v \in V(G) : c \in L(v) \}}\right\rvert$.  A proper $L$-coloring $f$ for $G$ is a \emph{proportional $L$-coloring} of $G$ if for each $c \in \mathcal{L}$, $f^{-1}(c)$, the color class of $c$, is of size
$$ \left \lfloor \frac{\eta(c)}{k} \right \rfloor \; \; \text{or} \; \; \left \lceil \frac{\eta(c)}{k} \right \rceil.$$
We say that $G$ is \emph{proportionally $L$-colorable} if a proportional $L$-coloring of $G$ exists, and we say $G$ is \emph{proportionally $k$-choosable} if $G$ is proportionally $L$-colorable whenever $L$ is a $k$-assignment for $G$.  Proportional choosability has some beautiful properties, some of which, at first glance, may seem quite surprising.

\begin{pro} [\cite{KM19}] \label{pro: motivation}
If $G$ is proportionally $k$-choosable, then $G$ is both equitably $k$-choosable and equitably $k$-colorable.
\end{pro}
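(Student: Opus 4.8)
The plan is to prove both claims more or less directly from the definitions, using that a proportional $L$-coloring is a proper $L$-coloring satisfying a tight size constraint on every color class. First I would establish that proportional $k$-choosability implies equitable $k$-choosability. Let $L$ be a $k$-assignment for $G$. By hypothesis $G$ has a proportional $L$-coloring $f$, so for each $c \in \mathcal{L}$ the color class $f^{-1}(c)$ has size at most $\lceil \eta(c)/k \rceil$. Since $\eta(c) \leq |V(G)|$ and $x \mapsto \lceil x/k \rceil$ is nondecreasing, $\lceil \eta(c)/k \rceil \leq \lceil |V(G)|/k \rceil$, so $f$ uses each color at most $\lceil |V(G)|/k \rceil$ times. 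Hence $f$ is an equitable $L$-coloring of $G$, and since $L$ was an arbitrary $k$-assignment, $G$ is equitably $k$-choosable.

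Next I would prove that proportional $k$-choosability implies equitable $k$-colorability. The key observation is that an equitable $k$-coloring of $G$ is exactly a proportional $L$-coloring of $G$ for the special ``constant'' $k$-assignment. Fix any $k$-element set $S$ of colors (say $S = [k]$) and let $L$ be the $k$-assignment with $L(v) = S$ for every $v \in V(G)$. Then $\mathcal{L} = S$, and for each $c \in S$ we have $\eta_L(c) = |V(G)|$, so a proportional $L$-coloring $f$ has every color class of size $\lfloor |V(G)|/k \rfloor$ or $\lceil |V(G)|/k \rceil$; that is, $f$ is precisely an equitable $k$-coloring of $G$. Since $G$ is proportionally $k$-choosable, such an $f$ exists, so $G$ is equitably $k$-colorable.

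Neither direction presents a real obstacle; the only point requiring a moment's care is the monotonicity step in the first part (bounding $\lceil \eta(c)/k\rceil$ by $\lceil |V(G)|/k \rceil$), together with the observation that the proportional condition's lower bound $\lfloor \eta(c)/k \rfloor$ is simply discarded when passing to equitable choosability. For the second part, the one subtlety is checking that the constant list assignment is a legitimate $k$-assignment (it is, since $|S| = k$) and that equal multiplicities $\eta_L(c) = |V(G)|$ force the two definitions to coincide exactly. This completes the argument.
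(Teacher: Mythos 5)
Your argument is correct. Note that the paper does not prove this proposition itself --- it is quoted from the earlier work \cite{KM19} --- so there is no in-paper proof to diverge from; both of your steps are the standard definitional arguments. Your second step (the constant assignment $L(v)=[k]$, for which $\eta(c)=|V(G)|$ for every $c$, so that a proportional $L$-coloring is exactly an equitable $k$-coloring) is precisely the construction the paper does use in its proof of Proposition~\ref{pro: k,k}, and your first step, bounding $\lceil \eta(c)/k\rceil \leq \lceil |V(G)|/k\rceil$ and discarding the lower bound, is the intended route to equitable $k$-choosability.
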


\begin{pro} [\cite{KM19}] \label{pro: monoink}
If $G$ is proportionally $k$-choosable, then $G$ is proportionally $(k+1)$-choosable.
\end{pro}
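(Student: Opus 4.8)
The plan is to start from an arbitrary $(k+1)$-assignment $L$ for $G$, delete exactly one color from each list $L(v)$ to obtain a $k$-assignment $L'$, invoke proportional $k$-choosability of $G$ to produce a proportional $L'$-coloring $f$, and then verify that $f$ is in fact a proportional $L$-coloring. The only delicate point is that the deletions must be spread evenly among the colors: if one deletes an arbitrary color from each list, some color could be deleted from every list containing it and then be used zero times by $f$, violating the lower bound forced by $L$ (for instance when that color has multiplicity exactly $k+1$).

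First I would arrange the deletion. Let $n=|V(G)|$ and form the bipartite graph $H$ with parts $V(G)$ and $\mathcal{L}$ in which $v$ is joined to $c$ precisely when $c\in L(v)$; every vertex of $V(G)$ has degree $k+1$ in $H$. The key claim is that there is a function $\phi\colon V(G)\to\mathcal{L}$ with $\phi(v)\in L(v)$ for all $v$ and
$$\left|\phi^{-1}(c)\right|\in\left\{\left\lfloor\frac{\eta_L(c)}{k+1}\right\rfloor,\ \left\lceil\frac{\eta_L(c)}{k+1}\right\rceil\right\}\qquad\text{for every }c\in\mathcal{L}.$$
To see this, consider the polytope $P$ of real vectors $(x_{vc})$ indexed by the edges $vc$ of $H$ satisfying $0\le x_{vc}\le 1$, $\sum_{c\in L(v)}x_{vc}=1$ for each $v\in V(G)$, and $\lfloor\eta_L(c)/(k+1)\rfloor\le\sum_{v:\,c\in L(v)}x_{vc}\le\lceil\eta_L(c)/(k+1)\rceil$ for each $c\in\mathcal{L}$. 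Setting every $x_{vc}$ equal to $1/(k+1)$ exhibits a point of $P$, so $P\neq\emptyset$; and the constraint matrix of $P$ is totally unimodular, being assembled from the incidence matrix of the bipartite graph $H$ together with identity blocks, while all the bounds appearing are integers. Hence $P$ has an integral vertex $(x^{*}_{vc})$; for each $v$ exactly one color $c$ has $x^{*}_{vc}=1$, and taking $\phi(v)$ to be that color proves the claim. (One could instead argue more combinatorially, choosing $\phi$ to minimize the total deviation of the counts $|\phi^{-1}(c)|$ from the intervals above and using an alternating-path exchange in $H$ to show that the minimum deviation is $0$.)

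Now put $L'(v)=L(v)\setminus\{\phi(v)\}$ for each $v$; this is a $k$-assignment for $G$, so by hypothesis $G$ has a proportional $L'$-coloring $f$. Since $L'(v)\subseteq L(v)$, the coloring $f$ is a proper $L$-coloring, so it remains only to check the color-class sizes. Fix $c\in\mathcal{L}$ and write $\eta_L(c)=q(k+1)+r$ with $0\le r\le k$; then $\eta_{L'}(c)=\eta_L(c)-|\phi^{-1}(c)|$, and by the claim $|\phi^{-1}(c)|=q$ when $r=0$ while $|\phi^{-1}(c)|\in\{q,q+1\}$ when $r\ge 1$. In either case $\eta_{L'}(c)$ lies in $\{qk,qk+1,\dots,(q+1)k\}$, and $\eta_{L'}(c)=qk$ exactly when $r=0$; a short check with floors and ceilings then shows
$$\left\{\left\lfloor\frac{\eta_{L'}(c)}{k}\right\rfloor,\ \left\lceil\frac{\eta_{L'}(c)}{k}\right\rceil\right\}\subseteq\left\{\left\lfloor\frac{\eta_L(c)}{k+1}\right\rfloor,\ \left\lceil\frac{\eta_L(c)}{k+1}\right\rceil\right\}.$$
As $f$ is proportional for $L'$, the value $|f^{-1}(c)|$ belongs to the left-hand set, hence to the right-hand set; since $c$ was arbitrary, $f$ is a proportional $L$-coloring, and so $G$ is proportionally $(k+1)$-choosable.

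I expect the balanced-deletion claim of the second paragraph to be the crux: the floor/ceiling manipulations in the last paragraph are routine, whereas a genuine argument — the total-unimodularity argument above, or an equivalent flow-feasibility or augmenting-path argument — is needed to guarantee that the deleted colors can be distributed so that no color is over-deleted.
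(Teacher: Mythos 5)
Your proposal cannot be matched against an internal argument: this paper states Proposition~\ref{pro: monoink} only as an imported result, citing~\cite{KM19}, and contains no proof of it, so the comparison is necessarily with that cited source rather than with anything proved here. On its own merits, your argument is correct and complete. You correctly identify the crux --- the deleted colors must be spread so that each color $c$ is removed from either $\lfloor \eta_L(c)/(k+1)\rfloor$ or $\lceil \eta_L(c)/(k+1)\rceil$ lists --- and your existence argument for the balanced deletion map $\phi$ is sound: the polytope is nonempty (the uniform point $x_{vc}=1/(k+1)$ is feasible), bounded, its constraint matrix is the incidence matrix of a bipartite graph augmented by identity rows and hence totally unimodular, and all bounds are integral, so an integral vertex exists; the suggested alternating-path exchange would also work. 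The final verification is also fine; the only blemish is the parenthetical claim that $\eta_{L'}(c)=qk$ ``exactly when $r=0$,'' which is false in one direction (it can also occur with $r=1$ and $|\phi^{-1}(c)|=q+1$), but only the implication from $r=0$ is needed, and in the exceptional case the left-hand set is $\{q\}\subseteq\{q,q+1\}$, so the inclusion you need still holds in every case, including colors of multiplicity one that get deleted from every list containing them.
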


\begin{pro} [\cite{KM19}] \label{lem: monotone}
Suppose $H$ is a subgraph of $G$.  If $G$ is proportionally $k$-choosable, then $H$ is proportionally $k$-choosable.
\end{pro}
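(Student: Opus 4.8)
The plan is to prove the statement by reducing to two elementary operations: deleting a single edge and deleting a single vertex. Since any subgraph $H$ of $G$ can be obtained from $G$ by first deleting the vertices in $V(G)\setminus V(H)$ one at a time and then deleting the edges in $E(G[V(H)])\setminus E(H)$ one at a time, it suffices to show that the property of being proportionally $k$-choosable is preserved under each of these two operations; the result then follows by induction on $|V(G)\setminus V(H)| + |E(G[V(H)])\setminus E(H)|$.

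For edge deletion, suppose $H = G - e$ and let $L$ be an arbitrary $k$-assignment for $H$. Then $L$ is also a $k$-assignment for $G$, and for each color $c$ the multiplicity $\eta_L(c)$ is the same whether computed with respect to $H$ or $G$, since the lists are identical. As $G$ is proportionally $k$-choosable, there is a proportional $L$-coloring $f$ of $G$. Any proper coloring of $G$ is proper on $H$, and the color classes $f^{-1}(c)$ are unchanged, so $f$ is a proportional $L$-coloring of $H$. Hence $H$ is proportionally $k$-choosable.

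For vertex deletion, suppose $H = G - v$ and let $L'$ be an arbitrary $k$-assignment for $H$. Extend $L'$ to a $k$-assignment $L$ for $G$ by setting $L(u) = L'(u)$ for all $u \in V(H)$ and letting $L(v)$ be any $k$-element set of colors disjoint from $\mathcal{L}'$ (such colors exist as the pool of possible colors is infinite). For every $c \in \mathcal{L}'$ we have $\eta_L(c) = \eta_{L'}(c)$, and for every $c \in L(v)$ we have $\eta_L(c) = 1$. Since $G$ is proportionally $k$-choosable, there is a proportional $L$-coloring $f$ of $G$. Because $L(v)$ is disjoint from $\mathcal{L}'$, the color $f(v)$ lies outside $\mathcal{L}'$, so for each $c \in \mathcal{L}'$ the color class $f^{-1}(c)$ is the same set whether viewed in $G$ or in $H$; its size is $\lfloor \eta_L(c)/k \rfloor$ or $\lceil \eta_L(c)/k \rceil$, which equals $\lfloor \eta_{L'}(c)/k \rfloor$ or $\lceil \eta_{L'}(c)/k \rceil$. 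Therefore the restriction of $f$ to $V(H)$ is a proportional $L'$-coloring of $H$, so $H$ is proportionally $k$-choosable.

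There is no serious obstacle here; the only point requiring care is the vertex-deletion step, where the colors assigned to $L(v)$ must be chosen fresh so that the extension of the list assignment does not alter the multiplicity, and hence the target color-class size, of any color already present in $\mathcal{L}'$. With that arranged, combining the two reductions yields the result.
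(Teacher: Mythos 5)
Your proof is correct and is essentially the argument the paper relies on: the paper cites this result from [KM19] rather than reproving it, and its remark that the proof ``relies on the construction of a list assignment that may have a large palette size'' refers precisely to your key step of assigning fresh, previously unused colors to deleted vertices so that multiplicities (and hence target class sizes) of the original colors are unchanged. The edge-deletion step and the one-operation-at-a-time induction are routine and fine.
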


Notice that Propositions~\ref{pro: monoink} and~\ref{lem: monotone} are particularly interesting since they do not hold in the contexts of equitable coloring and equitable choosability.  Recently, a nice characterization of the proportionally 2-choosable graphs was discovered; this characterization inspired the questions that lead to this paper.  Recall that a \emph{linear forest} is a disjoint union of paths.

\begin{thm} [\cite{KM20, M18}] \label{thm: full2character}
A graph $G$ is proportionally 2-choosable if and only if $G$ is a linear forest such that the largest component of $G$ has at most five vertices and all the other components of $G$ have two or fewer vertices.
\end{thm}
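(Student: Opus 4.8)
The statement is an ``if and only if'', so I would prove the two implications separately, and I expect the forward direction (necessity of the structure) to be comparatively clean while the backward direction (sufficiency) carries most of the technical weight. For necessity, the plan is to combine the subgraph monotonicity of proportional $2$-choosability (Proposition~\ref{lem: monotone}) with a short list of forbidden subgraphs. First, by Proposition~\ref{pro: motivation} every subgraph of a proportionally $2$-choosable graph is equitably $2$-colorable; since $K_{1,3}$ has no equitable $2$-coloring and odd cycles are not bipartite, a proportionally $2$-choosable graph $G$ has no $K_{1,3}$ subgraph and no odd cycle, hence (as $\Delta(G)\le 2$) is a disjoint union of paths and even cycles. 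Next I would exhibit explicit $2$-assignments admitting no proportional coloring for $C_4$, for $P_6$, and for $P_3 + P_3$. For $C_4$ the cyclic assignment $\{1,2\},\{2,3\},\{1,2\},\{2,3\}$ works, because color $2$ has multiplicity $4$, so its color class must be one of the two independent $2$-sets of $C_4$, each of which then forces color $1$ or color $3$ to be used twice rather than once. For $P_6$ and $P_3+P_3$ one builds analogous obstructions: a color of even multiplicity whose class is pinned to an independent set of a prescribed size, with the lists placed so that the forced remainder must overuse a second color. Given these, Proposition~\ref{lem: monotone} shows that a proportionally $2$-choosable $G$ contains none of $C_4, P_6, P_3+P_3$ as a subgraph; together with the absence of $K_{1,3}$ and odd cycles, and the observation that every even cycle of length at least $6$ contains $P_6$, this forces $G$ to be a linear forest with no component of order at least $6$ and at most one component of order at least $3$ --- exactly the asserted structure.

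For sufficiency, let $G = P \cup (\text{components of order} \le 2)$ with $|V(P)| = k \le 5$, and let $L$ be an arbitrary $2$-assignment for $G$; I must produce a proportional $L$-coloring. I would argue by first coloring the small components and then the path $P$, exploiting that the relevant data is finite once $k\le 5$: up to renaming colors there are boundedly many restrictions $L|_P$, and for each one the admissible tuples of color-class sizes on $V(P)$ form a bounded object. The small components are flexible --- a $P_1$-component has two legal colorings and a $P_2$-component has two, three, or four --- so their colorings can be chosen to steer the running class sizes, and one then verifies, by a finite case analysis over $k\in\{1,2,3,4,5\}$ and the possible multiplicity patterns, that $P$ can always be completed so that every color $c$ ends with class size $\lfloor \eta(c)/2\rfloor$ or $\lceil\eta(c)/2\rceil$.

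The main obstacle is precisely this coordination in the sufficiency proof. The multiplicities $\eta(c)$ are global quantities, so inserting even a single order-$\le 2$ component with list $\{a,b\}$ raises $\eta(a)$ and $\eta(b)$ by one and can flip the parity, and hence the target class size, of colors $a$ and $b$; thus the colorings of the small components cannot be chosen greedily in isolation but must be chosen with the eventual targets on $P$ in view. I would expect to organize the argument by splitting on whether some color has large multiplicity --- in which case its class, being large, is forced onto a maximum independent set of a short path, a very rigid situation --- or whether all multiplicities are small, in which case the slacks $|f^{-1}(c)| - \eta(c)/2 \in \{-\tfrac{1}{2},0,\tfrac{1}{2}\}$ must be balanced across many colors, and handling each regime separately.
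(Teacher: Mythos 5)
First, note that the paper does not prove this statement at all: Theorem~\ref{thm: full2character} is quoted from~\cite{KM20, M18}, so there is no internal proof to compare against. Judging your proposal on its own terms, the necessity direction is essentially sound and matches the standard obstruction strategy: subgraph monotonicity (Proposition~\ref{lem: monotone}) together with Proposition~\ref{pro: motivation} rules out $K_{1,3}$ and odd cycles, your $C_4$ assignment $\{1,2\},\{2,3\},\{1,2\},\{2,3\}$ does work, and a $P_3+P_3$ obstruction (e.g.\ the lists used in Proposition~\ref{pro: P6}: $\{1,2\}$ on the two centers, $\{1,3\}$ on one pair of leaves, $\{1,4\}$ on the other) finishes the structural restriction, since $P_6$ and every even cycle of length at least $6$ contain $P_3+P_3$. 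You leave the $P_6$ and $P_3+P_3$ assignments unspecified, but the idea is correct and easily completed.

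The genuine gap is in the sufficiency direction, which is the entire technical content of the theorem (it is the main result of~\cite{KM20}). Your plan rests on the claim that ``the relevant data is finite once $k\le 5$'' and that one can conclude by ``a finite case analysis over $k\in\{1,\dots,5\}$ and the possible multiplicity patterns.'' That is not true as stated: the number of $P_1$- and $P_2$-components is unbounded, the palette $\mathcal{L}$ is unbounded, and the multiplicities $\eta(c)$ are global quantities determined by all of these components at once. The object you must control is not $L|_P$ up to color renaming but the simultaneous parity/slack bookkeeping for arbitrarily many colors, where coloring a single $P_2$ with list pair $\{a,b\},\{a,c\}$ shifts the counts of two or three colors at once and each odd-multiplicity color must land on exactly $\lfloor\eta(c)/2\rfloor$ or $\lceil\eta(c)/2\rceil$. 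You correctly identify this coordination problem as ``the main obstacle,'' but the proposed remedy (split into a ``large multiplicity'' regime and a ``balance the slacks'' regime) is a restatement of the difficulty, not an argument: no mechanism is given for choosing the colorings of unboundedly many small components so that all the color classes simultaneously hit their targets, and this is precisely where the proof in~\cite{KM20} has to work (and where the bound of five vertices on the long path becomes delicate, since $P_6$ fails). As written, the sufficiency half is a plan to do a case analysis that is not actually finite, so the proof is incomplete at its crucial step.
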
  

\subsection{Proportional Choosability with a Bounded Palette}

Having defined proportional choosability, it is natural to consider proportional choosability with a bounded palette.  Suppose $1 \leq k \leq \ell$.  We say a graph $G$ is \emph{proportionally $(k,\ell)$-choosable} if $G$ is proportionally $L$-colorable whenever $L$ is a $(k, \ell)$-assignment for $G$.  Two properties of proportional $(k, \ell)$-choosability are easy to immediately prove.

\begin{pro} \label{pro: k,k}
For each $k \in \N$, $G$ is proportionally $(k,k)$-choosable if and only if $G$ is equitably $k$-colorable.
\end{pro}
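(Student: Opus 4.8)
The plan is to observe that, because every graph in this paper is nonempty, there is essentially only one $(k,k)$-assignment to consider, and that for that assignment a proportional $L$-coloring is literally the same object as an equitable $k$-coloring; the result then follows by unwinding definitions.

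First I would pin down the unique $(k,k)$-assignment. Suppose $L$ is a $(k,k)$-assignment for $G$. By definition $|L(v)| = k$ and $L(v) \subseteq \mathcal{L} \subseteq [k]$ for each $v \in V(G)$. Since $V(G) \neq \emptyset$, fixing any $v$ forces $L(v) = [k]$, whence $\mathcal{L} = [k]$; then for every $u \in V(G)$ we have $L(u) \subseteq [k]$ with $|L(u)| = k$, so $L(u) = [k]$. Thus the only $(k,k)$-assignment for $G$ is the constant assignment $L_0$ given by $L_0(v) = [k]$ for all $v \in V(G)$, and consequently $G$ is proportionally $(k,k)$-choosable if and only if $G$ is proportionally $L_0$-colorable.

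Next I would compute the relevant multiplicities and match the two definitions. For $L_0$ we have $\eta_{L_0}(c) = |\{v \in V(G) : c \in L_0(v)\}| = |V(G)|$ for every $c \in [k]$. Hence a proper $L_0$-coloring $f$ is a proportional $L_0$-coloring precisely when every color class $f^{-1}(c)$ has size $\lfloor |V(G)|/k \rfloor$ or $\lceil |V(G)|/k \rceil$. On the other hand, a proper $L_0$-coloring is exactly a proper coloring of $G$ using colors from $[k]$, i.e.\ a proper $k$-coloring, and the requirement that all $k$ color classes have size $\lfloor |V(G)|/k \rfloor$ or $\lceil |V(G)|/k \rceil$ is exactly the requirement that the class sizes differ by at most one. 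Therefore the proportional $L_0$-colorings of $G$ are precisely the equitable $k$-colorings of $G$, and combining this with the previous paragraph gives the claim.

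There is no real obstacle here; the only point deserving a word of care is the equivalence, for the constant assignment, between ``each of the $k$ color classes has size $\lfloor |V(G)|/k\rfloor$ or $\lceil |V(G)|/k\rceil$'' and ``the $k$ color classes differ in size by at most one.'' This is immediate from the elementary fact that a multiset of $k$ nonnegative integers summing to $n$ has all entries in $\{\lfloor n/k\rfloor, \lceil n/k\rceil\}$ if and only if its maximum and minimum differ by at most one, so I would include that one-line justification and otherwise present the argument essentially as above.
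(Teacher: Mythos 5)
Your proof is correct and follows essentially the same route as the paper's: both identify that the only $(k,k)$-assignment is the constant assignment $L(v)=[k]$, note that every color then has multiplicity $|V(G)|$, and conclude that proportional colorings for this assignment coincide with equitable $k$-colorings. Your extra care about the uniqueness of the assignment (using that $V(G)\neq\emptyset$) and the floor/ceiling versus differ-by-at-most-one equivalence only makes explicit what the paper treats as clear.
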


\begin{proof}
Suppose $G$ is proportionally $(k,k)$-choosable. Let $L$ be a $k$-assignment for $G$ such that $L(v)= [k]$ for all $v \in V(G)$. Note that $\eta (1) = \cdots = \eta (k) = |V(G)|$. Since $L$ is a $(k,k)$-assignment for $G$, we know there is a proportional $L$-coloring $f$ of $G$.  Clearly, $f$ is also an equitable $k$-coloring of $G$. 

Conversely, suppose $G$ is equitably $k$-colorable and $L$ is an arbitrary $(k,k)$-assignment for $G$. Notice that an equitable $k$-coloring of $G$ exists, and $L(v) = [k]$ for each $v \in V(G)$. The result follows since an equitable $k$-coloring of $G$ is also a proportional $L$-coloring of $G$.
\end{proof}

\begin{pro} \label{pro: monotonicity}
Suppose $1 \leq k \leq \ell$.  If $G$ is proportionally $(k, \ell + 1)$-choosable, then $G$ is proportionally $(k, \ell)$-choosable.
\end{pro}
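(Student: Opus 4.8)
The plan is to exploit the fact that, up to the trivial inclusion $[\ell] \subseteq [\ell+1]$, every $(k,\ell)$-assignment is also a $(k,\ell+1)$-assignment, together with the observation that whether a proper $L$-coloring is proportional depends only on $L$ itself and not on the ambient palette bound $\ell$.

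First I would fix an arbitrary $(k,\ell)$-assignment $L$ for $G$. Since $\mathcal{L} \subseteq [\ell] \subseteq [\ell+1]$ and $1 \le k \le \ell \le \ell+1$, the very same list assignment $L$ is a $(k,\ell+1)$-assignment for $G$. Because $G$ is proportionally $(k,\ell+1)$-choosable, there is a proportional $L$-coloring $f$ of $G$.

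Next I would check that $f$ remains a proportional $L$-coloring when $L$ is viewed as a $(k,\ell)$-assignment. This is immediate: the palette $\mathcal{L} = \bigcup_{v \in V(G)} L(v)$ and the multiplicities $\eta_L(c)$ for $c \in \mathcal{L}$ are determined entirely by $L$, and the defining requirement that $|f^{-1}(c)|$ equal $\lfloor \eta_L(c)/k \rfloor$ or $\lceil \eta_L(c)/k \rceil$ for every $c \in \mathcal{L}$ makes no reference to $\ell$ whatsoever. Hence $f$ witnesses that $G$ is proportionally $L$-colorable, and since $L$ was an arbitrary $(k,\ell)$-assignment, $G$ is proportionally $(k,\ell)$-choosable.

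There is essentially no obstacle here; the only thing requiring care is making the bookkeeping of the two viewpoints explicit, namely that enlarging the codomain from $[\ell]$ to $[\ell+1]$ changes neither $\mathcal{L}$ nor any $\eta_L(c)$, so the family of colorings being demanded is literally unchanged.
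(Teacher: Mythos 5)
Your proof is correct and is essentially the paper's own argument: both observe that a $(k,\ell)$-assignment $L$ is automatically a $(k,\ell+1)$-assignment and that the notion of a proportional $L$-coloring depends only on $L$, not on the palette bound. The extra bookkeeping you spell out (that $\mathcal{L}$ and the multiplicities $\eta_L(c)$ are unchanged) is implicit in the paper's one-line proof.
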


\begin{proof}
Suppose $G$ is proportionally $(k,\ell+1)$-choosable, and suppose $L$ is an arbitrary $(k,\ell)$-assignment for $G$.  Clearly, $L$ is also a $(k, \ell+1)$-assignment for $G$.  Since $G$ is proportionally $(k,\ell+1)$-choosable, we know that $G$ is proportionally $L$-colorable.
\end{proof}

The following question lead to the results in this paper.

\begin{ques} \label{ques: main}
For each $\ell \geq 2$, what graphs are proportionally $(2, \ell)$-choosable?
\end{ques}

Suppose $\mathcal{G}$ is the set of proportionally $2$-choosable graphs.  Notice that if $i \geq 2$ and $\mathcal{G}_i$ is the set of graphs that are proportionally $(2, i)$-choosable, then by Proposition~\ref{pro: monotonicity},
$$ \mathcal{G}_2 \supseteq \mathcal{G}_3 \supseteq \mathcal{G}_4 \supseteq \cdots .$$
By Theorem~\ref{thm: full2character}, for every $\ell \in \N$, $\mathcal{G}_{\ell}$ contains all linear forests such that the largest component has at most five vertices and all the other components have two or fewer vertices (i.e. $\mathcal{G}$ is a subset of $\mathcal{G}_{\ell}$ for each $\ell \in \N$).  Furthermore, Proposition~\ref{pro: k,k} tells us that $\mathcal{G}_2$ is exactly the set of equitably $2$-colorable graphs.  Since an $n$-vertex graph is proportionally $k$-choosable if and only if it is proportionally $(k,kn)$-choosable the following question and its generalization are natural. 

\begin{ques} \label{ques: smallest}
Is there a constant $\mu$ such that any graph $G$ is proportionally 2-choosable if and only if $G$ is proportionally $(2, \mu)$-choosable?
\end{ques}    

\begin{ques} \label{ques: smallestgen}
For each $k \geq 2$, is there a constant $\mu_k$ such that any graph $G$ is proportionally $k$-choosable if and only if $G$ is proportionally $(k , \mu_k)$-choosable? 
\end{ques}

Question~\ref{ques: smallestgen} is open for each $k \geq 3$.  The answer to Question~\ref{ques: smallest} is yes, and interestingly, the smallest such $\mu$ for which the answer is yes is 5.  Specifically, using the notation above, we will see below that
$$\mathcal{G}_2 \supset \mathcal{G}_3 \supset \mathcal{G}_4 \supset \mathcal{G}_5 \; \; \text{and} \; \; \mathcal{G}_\ell = \mathcal{G} \; \; \text{for each} \; \; \ell \geq 5.$$ 

\subsection{Outline of Results and an Open Question}

We will answer Question~\ref{ques: main} for $\ell = 2$ and each $\ell \geq 5$ which will give us an answer to Question~\ref{ques: smallest}.  We give a partial answer to Question~\ref{ques: main} when $\ell = 3, 4$.  The proofs of many of our results rely on finding ways to extend Proposition~\ref{lem: monotone} to the bounded palette context.  This presents some difficulties as the proof of Proposition~\ref{lem: monotone} relies on the construction of a list assignment that may have a large palette size (cf. the proof of Proposition~21 in~\cite{KM19}).  The characterization of the proportionally $(2,2)$-choosable graphs below follows immediately from Proposition~\ref{pro: k,k} and a simple characterization of equitably 2-colorable graphs.

\begin{obs} \label{thm: lis2}
A graph $G$ is proportionally $(2,2)$-choosable if and only if $G$ is a bipartite graph with a bipartition $X, Y$ satisfying $||X|-|Y|| \leq 1$.
\end{obs}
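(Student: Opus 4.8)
The plan is to combine Proposition~\ref{pro: k,k} (with $k=2$) with a standard characterization of equitably $2$-colorable graphs, so that the entire content reduces to the following claim: \emph{a graph $G$ is equitably $2$-colorable if and only if $G$ is bipartite and admits a bipartition $X,Y$ with $\bigl||X|-|Y|\bigr|\le 1$.} By Proposition~\ref{pro: k,k}, $G$ is proportionally $(2,2)$-choosable exactly when it is equitably $2$-colorable, so once this claim is established the observation follows immediately.

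For the forward direction of the claim, suppose $f$ is an equitable $2$-coloring of $G$. Then $f$ is a proper $2$-coloring, so $G$ is bipartite; moreover the two color classes $X=f^{-1}(1)$ and $Y=f^{-1}(2)$ form a bipartition, and since $f$ is equitable their sizes are $\lfloor|V(G)|/2\rfloor$ and $\lceil|V(G)|/2\rceil$ in some order, hence differ by at most one. For the reverse direction, assume $G$ has a bipartition $X,Y$ with $\bigl||X|-|Y|\bigr|\le 1$. If $G$ is connected and bipartite, its bipartition is unique, so we would be done in that case; in general, however, we must be careful, because a disconnected bipartite graph has several bipartitions and the ``natural'' one need not be balanced. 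The key observation is that the hypothesis hands us a balanced bipartition directly, and two-coloring according to it (say $X\mapsto 1$, $Y\mapsto 2$) is a proper $2$-coloring whose color classes have sizes differing by at most one — that is precisely an equitable $2$-coloring.

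I expect no serious obstacle here; the only subtlety worth a sentence is the one just mentioned, namely that one should state the characterization in terms of the \emph{existence} of a balanced bipartition rather than in terms of an arbitrary bipartition, since for disconnected graphs not every bipartition is balanced (for instance $K_{1,3}+K_{1,3}$ is bipartite but its ``sides'' can be made $\{2,2\}$ or $\{1,3\}$ by swapping one component, and it is in fact equitably $2$-colorable). With the characterization phrased this way the proof is a one-line reduction through Proposition~\ref{pro: k,k} followed by the two short implications above.
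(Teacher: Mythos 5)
Your proposal is correct and follows exactly the route the paper takes: the paper states that the observation ``follows immediately from Proposition~\ref{pro: k,k} and a simple characterization of equitably 2-colorable graphs,'' which is precisely your reduction plus the two short implications you spell out. Your extra remark about phrasing the characterization via the \emph{existence} of a balanced bipartition is a reasonable clarification but does not change the argument.
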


Our next result answers Question~\ref{ques: main} for each $\ell \geq 5$.

\begin{thm} \label{thm: lis5+}
For each $\ell \geq 5$, a graph $G$ is proportionally $(2, \ell)$-choosable if and only if $G$ is a linear forest such that the largest component of $G$ has at most $5$ vertices and all other components of $G$ have at most $2$ vertices.
\end{thm}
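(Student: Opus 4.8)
The plan is to prove the two directions separately, leveraging Theorem~\ref{thm: full2character} to avoid reproving the structural characterization from scratch. For the backward direction, suppose $G$ is a linear forest whose largest component has at most $5$ vertices and all of whose other components have at most $2$ vertices. Then by Theorem~\ref{thm: full2character}, $G$ is proportionally $2$-choosable, so $G$ is proportionally $L$-colorable for \emph{every} $2$-assignment $L$, and in particular for every $(2,\ell)$-assignment. Hence $G$ is proportionally $(2,\ell)$-choosable for every $\ell$, including every $\ell \geq 5$. This direction is immediate.

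The forward direction is the substantive one: assume $G$ is proportionally $(2,\ell)$-choosable for some fixed $\ell \geq 5$; we must show $G$ is proportionally $2$-choosable, i.e. that $G$ satisfies the structural conclusion of Theorem~\ref{thm: full2character}. By Proposition~\ref{pro: monotonicity} it suffices to treat $\ell = 5$, since proportional $(2,\ell)$-choosability for $\ell > 5$ implies proportional $(2,5)$-choosability. The natural strategy is the contrapositive: if $G$ is \emph{not} proportionally $2$-choosable, then by Theorem~\ref{thm: full2character} either (a) $G$ is not a linear forest (it contains a cycle or a vertex of degree $\geq 3$, hence contains $C_n$ for some $n\geq 3$ or $K_{1,3}$ as a subgraph), or (b) $G$ is a linear forest but has two components each with $\geq 3$ vertices, or a component with $\geq 3$ vertices together with the "too-large" obstruction, or a single component with $\geq 6$ vertices. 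In each case I would exhibit an explicit bad subgraph $H$ — one of $C_3$, $C_4$, $C_n$ $(n\ge 5)$, $K_{1,3}$, $P_6$, $2P_3$, $P_3 + P_3$, $P_3+P_2$ depending on which minimal non-members of $\mathcal{G}$ arise — and construct an explicit $(2,5)$-assignment $L$ on $H$ admitting no proportional $L$-coloring. Then, to transfer this to $G$, I would invoke (a bounded-palette-safe analogue of) Proposition~\ref{lem: monotone}: extend $L$ from $H$ to all of $G$ using colors from the same palette $[5]$, assigning the extra vertices lists in a way that does not create new proportional colorings on $H$ — the paper flags in the outline that this extension is the delicate point, since the usual proof of Proposition~\ref{lem: monotone} blows up the palette. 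Concretely, one assigns each vertex of $G - V(H)$ a list from $[5]$ (e.g. repeatedly using a fixed pair like $\{1,2\}$ on the isolated-ish leftover vertices, adjusting so multiplicities $\eta(c)$ force the floor/ceiling counts to pin down what happens on $H$) and checks that any proportional $L$-coloring of $G$ restricts to a proportional-type coloring of $H$ forbidden by construction.

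The main obstacle, as the authors themselves signal, is precisely this last transfer step: making Proposition~\ref{lem: monotone} work \emph{without increasing the palette beyond $5$}. One must choose the lists on $V(G)\setminus V(H)$ so that (i) the palette stays inside $[5]$, (ii) the parity/divisibility of each $\eta_L(c)$ is controlled so the global floor/ceiling constraints decouple into the $H$-part and the rest, and (iii) the rest of $G$ is always proportionally colorable no matter how the forbidden $H$-piece is handled — so that a proportional $L$-coloring of $G$ would yield one of the forbidden configurations on $H$. I expect this to require a short case analysis organized by the structure of the components of $G$ outside $H$ (paths of length $0,1,\ge 2$), using the slack that $5-2 = 3$ spare colors afford when rationing multiplicities. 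Once the transfer lemma is in hand, the remaining work is the finite, essentially mechanical task of writing down, for each of the finitely many minimal obstructions $H$, one explicit $(2,5)$-assignment with no proportional coloring — verifications of the same flavor as those already used to establish that $C_n$, $K_{1,3}$, $P_6$ and $2P_3$ fail to be proportionally $2$-choosable in the proof of Theorem~\ref{thm: full2character}.
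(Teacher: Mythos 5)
Your skeleton matches the paper's: the ``if'' direction via Theorem~\ref{thm: full2character}, and the ``only if'' direction by reducing to a bounded list of obstructions ($K_{1,3}$, cycles, and $P_3+P_3$, the last of which also rules out any path component on six or more vertices since such a path contains two vertex-disjoint copies of $P_3$). But the decisive step --- showing that each obstruction destroys proportional $(2,5)$-choosability of the \emph{whole} graph $G$ --- is exactly the step you leave unexecuted, and the route you propose for it would fail as stated. You plan to construct a bad assignment on the obstruction $H$ and then invoke ``a bounded-palette-safe analogue of Proposition~\ref{lem: monotone}.'' No such general transfer lemma is available: bounded-palette proportional choosability is \emph{not} subgraph-monotone, as the paper's own Propositions~\ref{pro: lis4} and~\ref{pro: lis3} witness ($P_6$ is not proportionally $(2,4)$-choosable yet $P_6+P_1$ is; $C_4$ is not proportionally $(2,3)$-choosable yet $C_4+P_1$ is). Consequently the transfer cannot be quoted as a lemma and must instead be built into each construction. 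This is what the paper does in Lemmas~\ref{pro: degree}, \ref{pro: cycle in G} and~\ref{pro: 2K12-5}: the bad $(2,\ell)$-assignment is defined on all of $V(G)$ at once, with every vertex outside the obstruction given a padding list ($\{2,3\}$, $\{3,4\}$, and $\{1,5\}$, respectively) chosen to avoid each color whose multiplicity drives the contradiction, so the counting argument on $H$ is unaffected by the rest of $G$. Your concrete suggestion of padding with a pair such as $\{1,2\}$ would wreck, for instance, the $K_{1,3}$ construction, since it alters $\eta(1)$ and $\eta(2)$, which are precisely the quantities that argument pins down; choosing the padding correctly is not a mechanical afterthought but the substance of the forward direction.

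Two smaller points. Reducing to $\ell=5$ via Proposition~\ref{pro: monotonicity} is legitimate (the paper instead proves each obstruction at its minimal palette size, $3$ for $K_{1,3}$, $4$ for cycles, $5$ for $P_3+P_3$, which is equivalent for this theorem). However, your list of candidate ``minimal non-members'' is off: $P_3+P_2$ satisfies the structural condition of Theorem~\ref{thm: full2character}, hence is proportionally $2$-choosable and cannot serve as an obstruction, while $P_6$ and $2P_3$ are redundant once $P_3+P_3$ is handled. The backward direction as you state it is correct and is the paper's argument verbatim.
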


When it comes to proportional $(2,4)$-choosability and proportional $(2,3)$-choosability, we have characterizations for connected graphs.

\begin{thm} \label{thm: lis4}
A connected graph $G$ is proportionally $(2, 4)$-choosable if and only if $G = P_n$ where $n \leq 5$ or $n = 7$.
\end{thm}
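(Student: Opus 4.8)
The plan is to prove the two implications separately, with essentially all of the work going into showing that $P_7$ is proportionally $(2,4)$-choosable. For the ``if'' direction the cases $n\le 5$ are free: by Theorem~\ref{thm: full2character} each such $P_n$ is proportionally $2$-choosable, and a proportionally $2$-choosable graph is proportionally $L$-colorable for \emph{every} $2$-assignment $L$, hence in particular for every $(2,4)$-assignment. So it remains to prove that $P_7$ is proportionally $(2,4)$-choosable and that no other connected graph is. For $P_7$ with an arbitrary $(2,4)$-assignment $L$ and vertices $v_1,\dots,v_7$ in order, I would split on $|\mathcal{L}|$. If $|\mathcal{L}|=2$ then $L(v)=\mathcal{L}$ for every $v$, so any equitable $2$-coloring of $P_7$ is a proportional $L$-coloring, and $P_7$ is equitably $2$-colorable since its bipartition classes have sizes $4$ and $3$ (this is Observation~\ref{thm: lis2}). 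If $|\mathcal{L}|\in\{3,4\}$, I would run a finite case analysis organized by the multiplicity vector $(\eta(c))_{c\in\mathcal{L}}$ and by the arrangement of the $2$-element lists along the path, producing a proportional coloring in each case --- either exhibited directly or obtained by starting from a convenient proper $L$-coloring and performing local recolorings along the path to repair the color-class sizes. Two bookkeeping facts keep this manageable: $\sum_c\eta(c)=14$, and, since $\sum_c(|f^{-1}(c)|-\eta(c)/2)=0$, the number of colors of odd multiplicity is even and exactly half of them must be ``rounded up'', which rules out many multiplicity vectors outright.

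For the ``only if'' direction, suppose $G$ is connected and proportionally $(2,4)$-choosable. First I would show $\Delta(G)\le 2$: if $v$ had three neighbors $u_1,u_2,u_3$, set $L(v)=L(u_1)=L(u_2)=L(u_3)=\{1,2\}$ and $L(w)=\{3,4\}$ for every other $w$; then $\eta(1)=\eta(2)=4$, so a proportional $L$-coloring must use each of colors $1,2$ exactly twice, yet coloring $v$ forces $u_1,u_2,u_3$ to all take $v$'s other color from $\{1,2\}$, using it at least three times --- a contradiction. Hence $G$ is a path or a cycle, and cycles are excluded: an odd cycle admits no proper coloring from the all-$\{1,2\}$ assignment, while for an even cycle $C_n$ the $(2,4)$-assignment alternating $\{1,2\}$ and $\{1,3\}$ has $\eta(1)=n$ and $\eta(2)=\eta(3)=n/2$, so in a proportional $L$-coloring color $1$ must occupy an independent set of size exactly $n/2$, i.e.\ one of the two bipartition classes, which forces the other class to be monochromatic in color $2$ or color $3$ and hence to use that color $n/2>\lceil n/4\rceil\ge|f^{-1}(2)|,|f^{-1}(3)|$ times --- again a contradiction. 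Thus $G=P_n$.

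It remains to determine which $n$ occur. We already have $n\le 5$ and $n=7$; to rule out $n=6$, take $L(v_1)=\{1,3\}$, $L(v_2)=\{1,2\}$, $L(v_3)=\{1,3\}$, $L(v_4)=\{1,4\}$, $L(v_5)=\{1,2\}$, $L(v_6)=\{1,4\}$, for which $\eta(1)=6$ and $\eta(2)=\eta(3)=\eta(4)=2$. A proportional $L$-coloring must then use color $1$ exactly three times and each of $2,3,4$ exactly once, which forces the set of vertices \emph{not} colored $1$ to consist of one vertex from each of $\{v_1,v_3\}$, $\{v_2,v_5\}$, $\{v_4,v_6\}$ and to be independent in $P_6$; a short check shows no such set exists. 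For $n\ge 8$ I would either produce analogous explicit bad $(2,4)$-assignments for each $P_n$, or, granting a bounded-palette refinement of Proposition~\ref{lem: monotone} of the sort needed for Theorem~\ref{thm: lis5+}, reduce to a single bad $(2,4)$-assignment for $P_8$, since every longer path contains $P_8$ as a subgraph.

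The hard part is unmistakably the $P_7$ sufficiency analysis. Since $P_6$ is \emph{not} proportionally $(2,4)$-choosable, we cannot hope to deduce the $P_7$ case from a smaller path, so there is no shortcut and one must genuinely verify that every configuration of a $(2,4)$-assignment on $P_7$ admits a proportional coloring. I expect the right way to organize this is to exploit that an edge whose endpoints have disjoint lists imposes no coloring constraint, so as to first simplify the list pattern, and then to handle the remaining overlapping-list configurations by a uniform local-recoloring argument; carrying that out cleanly is the main effort. A secondary technical point is making the $n\ge 8$ step rigorous without an unrestricted subgraph-monotonicity theorem for proportional $(2,\ell)$-choosability.
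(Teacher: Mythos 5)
You do not actually prove the hardest part of the theorem: that $P_7$ is proportionally $(2,4)$-choosable. Your proposal only describes how a case analysis ``would'' be organized (splitting on $|\mathcal{L}|$, multiplicity vectors, local recolorings), and the bookkeeping facts you cite ($\sum_c \eta(c)=14$, evenly many colors of odd multiplicity) are correct but nowhere near a verification. This is precisely the step the paper could not settle by a clean hand argument; it is done by an exhaustive computer check of all $\binom{4}{2}^7=279936$ possible $(2,4)$-assignments for $P_7$ (Proposition~\ref{pro: P7}). As written, the ``if'' direction for $n=7$ is therefore unsupported.

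The $n\ge 8$ exclusion is also not established. Your fallback --- ``granting a bounded-palette refinement of Proposition~\ref{lem: monotone}, reduce to a bad assignment for $P_8$ since every longer path contains $P_8$'' --- assumes subgraph monotonicity for proportional $(2,4)$-choosability, which is false in the bounded-palette setting: Proposition~\ref{pro: lis4} shows $P_6+P_1$ is proportionally $(2,4)$-choosable even though its subgraph $P_6$ is not, exactly because deleting or adding vertices changes the multiplicities $\eta(c)$ and hence the floor/ceiling targets. Your other option, ``analogous explicit bad assignments for each $P_n$,'' is not carried out and is not routine for the same reason. The paper's actual mechanism is different: spanning-subgraph monotonicity (Proposition~\ref{pro: span}, valid because the vertex set, hence every $\eta(c)$, is unchanged) combined with Corollary~\ref{cor: addp2} (appending a $P_2$ whose two vertices get list $\{1,2\}$ preserves non-choosability), giving the induction $P_{n-2}$ bad $\Rightarrow P_{n-2}+P_2$ bad $\Rightarrow P_n$ bad. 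Since this steps by two, it needs the base cases $P_8$ and $P_9$, and $P_9$ requires its own explicit bad $(2,4)$-assignment (Proposition~\ref{pro: P9}); your proposal has no mechanism at all for odd $n\ge 9$. Your $\Delta(G)\le 2$ argument, cycle argument, and $P_6$ construction are fine and essentially the paper's (your $P_6$ assignment is the paper's $P_3+P_3$ assignment read along the path), with one small slip: it is the color-$1$ class, not the set of vertices not colored $1$, that must be independent --- your check over all transversals of the three pairs happens to cover this, so that part survives.
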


\begin{thm} \label{thm: lis3}
A connected graph $G$ is proportionally $(2,3)$-choosable if and only if $G= P_n$ for some $n \in \N$.
\end{thm}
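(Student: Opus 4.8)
The plan is to establish the two implications separately and by explicit constructions, rather than via a general monotonicity principle for subgraphs (which, as the authors note, is delicate once the palette is bounded).

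\emph{Necessity.} I would show that a connected graph $G$ that is not a path is not proportionally $(2,3)$-choosable. Such a $G$ either has a vertex of degree at least $3$ or is a cycle. First suppose $u$ is a vertex with $d:=\deg(u)\ge 3$, and let $L(v)=\{1,2\}$ for every $v\in\{u\}\cup N(u)$ and $L(v)=\{1,3\}$ for every other vertex. In any proper $L$-coloring $f$, each neighbor of $u$ must receive the color of $\{1,2\}$ not used on $u$, so $N(u)$ is monochromatic; since no vertex outside $\{u\}\cup N(u)$ can receive color $2$, we get $|f^{-1}(2)|\in\{1,d\}$. But $\eta_L(2)=d+1$, and for $d\ge 3$ a proportional coloring must have $|f^{-1}(2)|\in\{\lfloor (d+1)/2\rfloor,\lceil (d+1)/2\rceil\}\subseteq\{2,\dots,d-1\}$, which is disjoint from $\{1,d\}$ — a contradiction. (If $N(u)$ is not independent there is no proper $L$-coloring at all.) Now suppose $G$ is a cycle $C_n$. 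If $n$ is odd, $G$ is not bipartite, hence not proportionally $(2,2)$-choosable by Observation \ref{thm: lis2}, hence not proportionally $(2,3)$-choosable by Proposition \ref{pro: monotonicity}. If $n=2m$ with $m\ge 2$, write the vertices $v_1,\dots,v_{2m}$ in cyclic order and put $L(v_i)=\{1,3\}$ for odd $i$ and $L(v_i)=\{1,2\}$ for even $i$. Then $\eta_L(1)=2m$, so a proportional $L$-coloring $f$ uses color $1$ on exactly $m$ vertices forming an independent set; the only size-$m$ independent sets of $C_{2m}$ are its two parts, so the remaining $m$ vertices lie in a single part and are forced to one color $c\in\{2,3\}$, giving $|f^{-1}(c)|=m$ while $\eta_L(c)=m$ requires $|f^{-1}(c)|\le\lceil m/2\rceil<m$ — a contradiction. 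Hence a connected proportionally $(2,3)$-choosable graph must be a path.

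\emph{Sufficiency.} I must show $P_n$ is proportionally $(2,3)$-choosable for all $n$. For $n\le 5$ this is immediate from Theorem \ref{thm: full2character}, since $P_n$ is then proportionally $2$-choosable and every $(2,3)$-assignment is a $2$-assignment. So let $n\ge 6$, let $L$ be a $(2,3)$-assignment for $P_n$ with vertices $v_1,\dots,v_n$ in order. If $|\mathcal L|\le 2$ then all lists coincide, a proportional $L$-coloring is exactly an equitable $2$-coloring of $P_n$, and such a coloring exists; so assume $|\mathcal L|=3$, in which case every vertex $v$ omits a unique color $\overline v$ from its list and $\eta_L(c)=n-|\{v:\overline v=c\}|$. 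I would proceed by strong induction on $n$: delete one or two vertices from an end of $P_n$ to obtain $P_{n'}$ with the induced $(2,3)$-assignment $L'$, apply the inductive hypothesis to get a proportional $L'$-coloring $g$, and extend $g$ across the deleted vertices. Since deletion lowers certain multiplicities, a proportional $L'$-coloring need not extend to a proportional $L$-coloring, so I would strengthen the inductive statement to produce a proportional coloring that, in addition, realizes any prescribed one of the (at most two) admissible class-size triples and assigns a prescribed admissible color to a designated endpoint of the path. The inductive step then reduces to a finite case analysis governed by $L(v_1)$, $L(v_2)$, the color $g$ places at the endpoint it meets, and the parities of $\eta_L(1),\eta_L(2),\eta_L(3)$.

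I expect the genuine difficulty to be confined to sufficiency for $n\ge 6$: choosing a strengthened inductive hypothesis strong enough to push through the extension step yet still true at the base cases, and organizing the resulting case analysis. The necessity direction and the case $n\le 5$ are routine given the explicit assignments above together with Theorem \ref{thm: full2character}. An alternative I would try first, avoiding induction, is a direct construction from the distribution of the forbidden colors $\overline v$ along the path: near-alternate the two colors that appear most often in the lists and insert the third color exactly where the list and the previously used color demand it, then verify that the resulting color-class sizes land in the proportional windows, treating separately the case in which some color belongs to every list.
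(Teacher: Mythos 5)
Your necessity argument is correct and runs parallel to the paper's: your closed-neighborhood gadget for a vertex of degree $d\geq 3$ is a sound variant of the paper's $K_{1,3}$ construction (Lemma~\ref{pro: degree}), and your even-cycle assignment is essentially the one used in Lemma~\ref{pro: cycle is G}; the odd-cycle and $n\leq 5$ reductions via Observation~\ref{thm: lis2}, Proposition~\ref{pro: monotonicity}, and Theorem~\ref{thm: full2character} are also fine.

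The sufficiency direction, however, is where all of the real work lies, and there you have a genuine gap: your inductive scheme rests on a strengthened hypothesis (``produce a proportional coloring realizing any prescribed admissible class-size triple and assigning a prescribed admissible color to a designated endpoint'') that is never proved and, as stated, is false. For example, take $P_3$ with lists $\{1,2\},\{1,3\},\{1,2\}$: here $\eta(1)=3$, $\eta(2)=2$, $\eta(3)=1$, so the triple $(2,1,0)$ lies inside the proportionality windows and sums to $3$, yet it is not realizable at all (forcing $v_2\mapsto 1$ forces both endpoints to color $2$); and even among realizable triples one cannot in general also prescribe the endpoint color. So the ``finite case analysis'' you defer to is precisely the step that can fail, and repairing it is the substance of the theorem. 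The paper takes a different route: it first reduces to $(2,3)$-assignments in which every color has even multiplicity (Lemma~\ref{lem: finpath}, by appending one pendant vertex whose list consists of the two odd-multiplicity colors and restricting), and then proves Proposition~\ref{pro: evenmult} by analyzing a minimal counterexample: a sequence of structural facts (no two consecutive equal lists; every $\{2,3\}$-list sits at an even position flanked by $\{1,2\}$-lists; $\{1,2\}$ occurs exactly at odd positions; the last list is $\{1,2\}$) culminating in an explicit symmetric decomposition colored by the $\alpha_2$/$\alpha_3$ patterns, which produces a proportional coloring and yields the contradiction. Your alternative ``direct construction'' sketch has the same status: plausible in outline, but the verification that the class sizes land in the proportional windows is exactly what is missing. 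As it stands, the proposal proves only the easy half of the theorem.
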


Theorem~\ref{thm: lis3} is particularly interesting since in general, little is known about the proportional choosability of paths (cf. Questions~6 and~7 in~\cite{KM20}).  With Theorems~\ref{thm: lis4} and~\ref{thm: lis3} in mind, the following question is natural.

\begin{ques} \label{ques: open}
For $\ell=3,4$ what graphs are proportionally $(2, \ell)$-choosable?
\end{ques}

One might conjecture that a graph $G$ is proportionally $(2,4)$-choosable (resp. $(2,3)$-choosable) if and only if the components of $G$ are proportionally $(2,4)$-choosable (resp. $(2,3)$-choosable).  This conjecture however is not correct in both directions for proportional $(2,4)$-choosability, and the ``only if" direction of this conjecture is not correct for proportional $(2,3)$-choosability.  The following results demonstrate this.  When reading the results below note that by Theorems~\ref{thm: lis4} and~\ref{thm: lis3} we know: $P_3$ is proportionally $(2,4)$-choosable, $P_6$ is not proportionally $(2,4)$-choosable, and $C_4$ is not proportionally $(2,3)$-choosable. 

\begin{pro} \label{pro: P6}
$P_3+P_3$ is not proportionally $(2,4)$-choosable.
\end{pro}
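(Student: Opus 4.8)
The plan is to exhibit a single $(2,4)$-assignment $L$ for $G = P_3 + P_3$ that admits no proportional $L$-coloring. Write the two paths as $u_1 u_2 u_3$ and $w_1 w_2 w_3$ in order, so $G$ has six vertices. Since a $(2,4)$-assignment draws lists from the palette $[4]$, I will try to balance the multiplicities so that each of the four colors $c$ has $\eta(c) = 3$, forcing $\lfloor \eta(c)/2 \rfloor = 1$ and $\lceil \eta(c)/2 \rceil = 2$; then a proportional $L$-coloring must use each color either once or twice, and since $4$ colors must cover $6$ vertices, exactly two colors are used twice and two colors are used once. The assignment I would test is a symmetric one, e.g. giving the two endpoints of each path the ``outer'' colors and the center the ``inner'' colors: for instance $L(u_1)=L(w_1)=\{1,2\}$, $L(u_3)=L(w_3)=\{3,4\}$, and $L(u_2)=L(w_2)=\{1,3\}$ (or some variant), then check and, if needed, perturb so that all four multiplicities equal $3$ while no valid color-count profile can be realized by a proper coloring.

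The key steps, in order, are: (1) fix an explicit $L$ and compute $\eta(c)$ for each $c \in [4]$, verifying these values force the ``two colors twice, two colors once'' profile (or whatever profile the chosen $L$ yields); (2) enumerate, up to the evident symmetries of $G$ and of $L$, the few ways a proper $L$-coloring can assign colors to $u_1u_2u_3$ and independently to $w_1w_2w_3$ — each path on three vertices has only a handful of proper colorings consistent with its lists; (3) for each combination, tally how many times each color is used and show the profile never matches the one dictated by the multiplicities. Because $P_3$ has so few $L$-colorings (at most a constant number per path given $2$-element lists), step (2)–(3) is a genuinely finite case check that can be presented compactly in a short table or a couple of sentences.

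The main obstacle is choosing $L$ correctly: I need the multiplicity vector to pin down a rigid color-usage profile, yet simultaneously arrange the lists so that every combination of the (independent) proper colorings of the two components overshoots or undershoots some color. A poorly chosen symmetric $L$ will often be proportionally colorable precisely because the two components can ``compensate'' for each other — so the design must break that compensation. I expect the right choice makes one color appear in the center of both paths (so it is hard to use twice without a conflict) while another color appears only at endpoints (so it is used an even number of times automatically), creating a parity clash with the required profile. Once $L$ is pinned down, verifying non-colorability is routine; the creative content is entirely in the construction of $L$, and I would present that construction first, state the multiplicities, and then dispatch the finitely many colorings.
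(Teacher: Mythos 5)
There is a genuine gap: the entire content of this proposition is a concrete $(2,4)$-assignment with no proportional coloring, and you never actually produce one. The one assignment you do write down, $L(u_1)=L(w_1)=\{1,2\}$, $L(u_2)=L(w_2)=\{1,3\}$, $L(u_3)=L(w_3)=\{3,4\}$, does not have the multiplicity vector you announce (it gives $\eta(1)=\eta(3)=4$ and $\eta(2)=\eta(4)=2$, not all equal to $3$), and worse, it admits a proportional $L$-coloring: color one path $1,3,4$ and the other $2,1,3$; this is proper, and the usage counts $2,1,2,1$ for colors $1,2,3,4$ match exactly the required values $\eta(c)/2$. So your ``perturb if needed'' step is where the whole proof lives, and it is not carried out. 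Moreover, the target profile you propose (all four multiplicities equal to $3$, so each color is used once or twice with two colors used twice and two used once) is a loose constraint that gives the two components many ways to compensate for each other --- precisely the failure mode you identify at the end --- so it is not clear any assignment with that profile works at all.

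The paper's construction breaks the compensation in a different, much more rigid way: it puts one color in every list, so its multiplicity is $6$ and it must be used exactly $3$ times, and it gives the remaining colors multiplicity $2$ so each must be used exactly once. Concretely, with the two copies of $K_{1,2}$ having centers $v_1,v_2$ and leaf pairs $\{w_1,w_2\}$, $\{w_3,w_4\}$, take $L(v_1)=L(v_2)=\{1,2\}$, $L(w_1)=L(w_2)=\{1,3\}$, $L(w_3)=L(w_4)=\{1,4\}$. Since color $2$ must be used exactly once, one center gets $2$ and the other gets $1$; the center colored $1$ forces both of its leaves to take their non-$1$ color ($3$ or $4$), which is then used twice instead of once. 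Your instinct that one should exploit the center-versus-leaf structure and a color that is ``hard to use the right number of times'' is pointing in the right direction, but without an explicit list assignment and the accompanying finite check, the argument is not a proof.
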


\begin{pro} \label{pro: lis4}
$P_6 + P_1$ is proportionally $(2,4)$-choosable.
\end{pro}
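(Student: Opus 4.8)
The plan is to avoid a direct case analysis on the $(2,4)$-assignments of $P_6+P_1$ by reducing instead to the fact, recorded in Theorem~\ref{thm: lis4}, that $P_7$ is proportionally $(2,4)$-choosable. The key observation is that $P_6+P_1$ is a \emph{spanning} subgraph of $P_7$: writing the vertices of $P_7$ in order as $v_1,\ldots,v_7$, deletion of the single edge $v_6v_7$ leaves a path on $v_1,\ldots,v_6$ together with the isolated vertex $v_7$, i.e. a copy of $P_6+P_1$ on the same vertex set.

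First I would record (or simply verify inline) the following easy principle: if $H$ is a spanning subgraph of $G$ and $G$ is proportionally $(k,\ell)$-choosable, then so is $H$. Indeed, given a $(k,\ell)$-assignment $L$ for $H$, since $V(H)=V(G)$ the same $L$ is a $(k,\ell)$-assignment for $G$, so there is a proportional $L$-coloring $f$ of $G$. Because $E(H)\subseteq E(G)$, the coloring $f$ is still a proper $L$-coloring of $H$; and because $\eta_L(c)$ and $|f^{-1}(c)|$ depend only on the vertex set and on $L$, not on the edges, we still have $\lfloor \eta_L(c)/k\rfloor \le |f^{-1}(c)| \le \lceil \eta_L(c)/k\rceil$ for every $c\in\mathcal{L}$. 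Hence $f$ is a proportional $L$-coloring of $H$. This is exactly the part of subgraph monotonicity that survives in the bounded-palette setting: unlike vertex deletion, which changes the $\eta$-values and is the source of the difficulties in extending Proposition~\ref{lem: monotone} to bounded palettes, edge deletion changes nothing relevant.

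Combining the two observations completes the proof: $P_7$ is proportionally $(2,4)$-choosable by Theorem~\ref{thm: lis4}, and $P_6+P_1$ is a spanning subgraph of $P_7$, so $P_6+P_1$ is proportionally $(2,4)$-choosable.

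There is essentially no hard step here; the only point requiring care is confirming that the spanning-subgraph inheritance really does respect the palette bound, which it does because list sizes and the palette of an assignment are defined purely in terms of vertices. (If one instead wanted a self-contained argument that does not invoke Theorem~\ref{thm: lis4}, one could split into cases according to whether $|\mathcal{L}|\in\{2,3,4\}$, color the $P_6$ part first, and then use the isolated vertex as a free ``slack'' vertex to adjust one color class by $\pm 1$; but this is strictly more work, and the spanning-subgraph reduction makes it unnecessary.)
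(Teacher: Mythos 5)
Your proof is correct and matches the paper's own argument: the paper likewise notes that Proposition~\ref{pro: span} (the spanning-subgraph principle you re-derive, stated there in contrapositive form) together with the proportional $(2,4)$-choosability of $P_7$ (Proposition~\ref{pro: P7}) immediately yields the result, since $P_6+P_1$ is a spanning subgraph of $P_7$. There is no circularity in citing Theorem~\ref{thm: lis4}, as that theorem's proof does not depend on this proposition.
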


\begin{pro} \label{pro: lis3}
$C_4 + P_1$ is proportionally $(2,3)$-choosable.
\end{pro}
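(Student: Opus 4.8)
The plan is to fix an arbitrary $(2,3)$-assignment $L$ for $G = C_4 + P_1$ and build a proportional $L$-coloring directly. Write $V(C_4) = \{v_1, v_2, v_3, v_4\}$ in cyclic order and let $u$ be the isolated vertex, so $|V(G)| = 5$ and each list is a $2$-subset of $\mathcal{L} \subseteq [3]$. If $|\mathcal{L}| \le 2$, then every list equals $\mathcal{L}$, so $L$ is a $(2,2)$-assignment; since $C_4 + P_1$ is bipartite with a bipartition (namely $\{v_1, v_3\}$ and $\{v_2, v_4, u\}$) whose parts differ in size by $1$, Observation~\ref{thm: lis2} supplies a proportional $L$-coloring. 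Hence we may assume $\mathcal{L} = \{1,2,3\}$.

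Let $a, b, c$ be the numbers of vertices with list $\{1,2\}$, $\{1,3\}$, $\{2,3\}$ respectively. Then $a + b + c = 5$, $\eta(1) = a+b$, $\eta(2) = a+c$, $\eta(3) = b+c$, and at most one of $a,b,c$ is $0$ (otherwise $|\mathcal{L}| < 3$). Permuting the three colors, we may assume $a \ge b \ge c$, so
$$(a,b,c) \in \{(4,1,0),\ (3,2,0),\ (3,1,1),\ (2,2,1)\},$$
and correspondingly $(\eta(1),\eta(2),\eta(3)) \in \{(5,4,1),\ (5,3,2),\ (4,4,2),\ (4,3,3)\}$. Thus the permitted color-class sizes are immediate in each case; for example, in the last case color $1$ must be used exactly twice while colors $2$ and $3$ may each be used once or twice.

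In each of the four cases I would exhibit an explicit proportional $L$-coloring. Since $\mathrm{Aut}(G)$ fixes $u$ and acts on $\{v_1, v_2, v_3, v_4\}$ as the dihedral group of order $8$, the placement of the list-types on the five vertices reduces, up to isomorphism (and the residual color symmetries fixing $(a,b,c)$), to a short list of configurations; the only feature that matters is whether the vertices carrying a given minority list sit on the cycle in adjacent positions, in opposite positions, or include $u$. For each such configuration I choose a color for $u$ and colors for the two diagonal pairs $\{v_1, v_3\}$, $\{v_2, v_4\}$ so that the coloring is proper and every color class has a permitted size; e.g.\ when $(a,b,c) = (4,1,0)$ with the unique $\{1,3\}$-vertex equal to $v_1$, taking $v_1 \mapsto 3$, $v_2 = v_4 \mapsto 1$, $v_3 \mapsto 2$, and $u \mapsto 2$ gives class sizes $(2,2,1)$, which is permitted since $(\eta(1),\eta(2),\eta(3)) = (5,4,1)$. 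Equivalently, one can first record that a proper $L$-coloring of $C_4$ alone can realize exactly the color-count vectors that are list-consistent permutations of $(2,2,0)$ or $(2,1,1)$, then observe that coloring $u$ adds $1$ to one of its two list colors, and check case by case that one of the resulting count vectors satisfies the proportionality bounds.

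The routine but lengthiest part — and the only place a configuration could conceivably fail — is the case $(a,b,c) = (2,2,1)$, i.e.\ $\eta = (4,3,3)$: here color $1$ is pinned to exactly two vertices while the two $\{1,3\}$-vertices and the single $\{2,3\}$-vertex must be colored so that colors $2$ and $3$ each land on one or two vertices, and one must verify that for every placement of these vertices around $C_4$ at least one of the valid patterns $(2,2,1)$ or $(2,1,2)$ is achieved by a proper coloring. The remaining three cases have more slack — either a color of multiplicity $1$ can be dropped, or two colors of multiplicity $\ge 4$ provide two independent degrees of freedom — so a single explicit coloring handles each of their configurations comfortably.
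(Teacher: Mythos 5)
Your setup is correct and your strategy would succeed: the reduction to palette $\{1,2,3\}$, the four multiplicity patterns $(a,b,c)$ with their $\eta$-vectors and permitted class sizes, and the symmetry reductions are all right, and the finite check you describe does in fact go through (for instance, in the sub-configuration of your flagged case $\eta=(4,3,3)$ where a naive attempt jams --- $L(v_1)=\{2,3\}$, $L(v_2)=L(v_4)=\{1,3\}$, $L(v_3)=L(u)=\{1,2\}$ --- the coloring $3,1,2,1$ on $v_1,\dots,v_4$ and $2$ on $u$ has class sizes $(2,2,1)$, which is permitted). The genuine gap is that you never carry out the verification that constitutes essentially the whole proof: you write ``I would exhibit an explicit proportional $L$-coloring'' and ``one must verify that for every placement \dots,'' and in particular the case $(a,b,c)=(2,2,1)$, which you yourself identify as the only place a configuration could conceivably fail, is left entirely unchecked. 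As submitted this is a proof plan, not a proof; it becomes a proof only once the short list of configurations (a handful per case, up to the dihedral symmetry and the color swap $2\leftrightarrow 3$) is actually written down with a coloring for each, or the $\eta=(4,3,3)$ case is argued in full. A secondary caution: the parenthetical claim that proper $L$-colorings of $C_4$ realize ``exactly'' the list-consistent permutations of $(2,2,0)$ and $(2,1,1)$ should be stated as a containment (realizable count vectors are among these), since which ones occur depends on the placement of the lists.

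Your route also differs from the paper's. The paper argues by contradiction from a hypothetical bad $(2,3)$-assignment and constrains its structure rather than enumerating it: equal lists on adjacent cycle vertices are excluded by deleting that pair and invoking Proposition~\ref{pro: span} together with Theorem~\ref{thm: lis3} (applied to $P_2+P_1$), a $\{2,3\}$ list on the cycle is excluded similarly via $P_1+P_1$ (with a small recoloring argument when the first attempt collides at $v_4$), which forces the cycle lists to alternate $\{1,2\},\{1,3\}$; then only the three possibilities for $L(u_1)$ remain, each killed by one explicit coloring. Your approach is more elementary and self-contained --- it never uses the path results --- at the price of a longer exhaustive check; the paper buys a very short case analysis by reusing Theorem~\ref{thm: lis3}, which is exactly the kind of reduction your sketch forgoes.
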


\section{Theorem~\ref{thm: lis5+}}

We begin by proving three lemmas.

\begin{lem} \label{pro: degree}
If $G$ contains a copy of $K_{1,3}$ as a subgraph, then $G$ is not proportionally $(2,3)$-choosable.  Consequently, if a graph $G$ is proportionally $(2, \ell)$-choosable for some $\ell \geq 3$, then $\Delta(G) \leq 2$.
\end{lem}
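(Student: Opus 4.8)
The plan is to establish the first statement by exhibiting, for an arbitrary graph $G$ containing a copy of $K_{1,3}$, a single $(2,3)$-assignment $L$ for which no proportional $L$-coloring exists; the ``consequently'' part then drops out immediately.

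First I would fix a copy of $K_{1,3}$ inside $G$, say with center $v$ and leaves $u_1, u_2, u_3$, and define $L(v) = L(u_1) = L(u_2) = L(u_3) = \{1,2\}$ while $L(w) = \{1,3\}$ for every remaining vertex $w \in V(G)$ (if any). This is a valid $(2,3)$-assignment, since every list has size $2$ and $\mathcal{L} \subseteq \{1,2,3\}$. The entire point of this choice is that color $2$ lies in exactly the four lists $L(v), L(u_1), L(u_2), L(u_3)$, so $\eta_L(2) = 4$; consequently a proportional $L$-coloring would be obligated to use color $2$ on exactly $\lfloor 4/2 \rfloor = \lceil 4/2 \rceil = 2$ vertices.

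Next I would show this cannot happen. In any proper $L$-coloring $f$ of $G$, each of $v, u_1, u_2, u_3$ receives a color from $\{1,2\}$, and since $v u_i \in E(G)$ for each $i$, either $f(v) = 1$ and $f(u_1) = f(u_2) = f(u_3) = 2$, or $f(v) = 2$ and $f(u_1) = f(u_2) = f(u_3) = 1$. In the former case color $2$ is used on three vertices, in the latter on exactly one; in both cases this contradicts the required count of two (and if $G$ happens to also have an edge among the $u_i$, there is no proper $L$-coloring at all, which is even better). Hence $G$ is not proportionally $L$-colorable, so $G$ is not proportionally $(2,3)$-choosable.

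Finally, for the consequence: if $G$ is proportionally $(2,\ell)$-choosable with $\ell \geq 3$, then by repeated application of Proposition~\ref{pro: monotonicity} it is proportionally $(2,3)$-choosable, so by the first part $G$ has no $K_{1,3}$ subgraph; a vertex of degree at least $3$ together with three of its neighbors would form such a subgraph, whence $\Delta(G) \leq 2$. I do not expect a genuine obstacle here. The one subtlety worth attention is that proportional choosability with a bounded palette is not obviously monotone under taking subgraphs, so the bad assignment must be designed to be ``local'' to the chosen $K_{1,3}$ — in particular color $2$ must be kept out of every other list — so that the argument goes through uniformly for every $G$ containing $K_{1,3}$ and not merely for $K_{1,3}$ itself.
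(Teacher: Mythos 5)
Your proposal is correct and is essentially the paper's own proof: the paper assigns $\{1,2\}$ to the vertices of the $K_{1,3}$ and $\{2,3\}$ to all other vertices, tracking color $1$ with $\eta(1)=4$, which is your construction with the roles of colors $1$ and $2$ swapped. The case analysis on the center's color forcing the tracked color to be used $1$ or $3$ times (never $2$), and the deduction of $\Delta(G)\leq 2$ via Proposition~\ref{pro: monotonicity}, match the paper's argument.
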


\begin{proof}
Suppose $H$ is a subgraph of $G$ such that $H = K_{1,3}$, and suppose $H$ has bipartition $\set{a}$ and $\set{b_{1},b_{2},b_{3}}$.  To prove the desired, we will construct a $(2,3)$-assignment, $L$, for $G$ such that there is no proportional $L$-coloring of $G$. Suppose $L$ is the $(2,3)$-assignment for $G$ such that for each $v \in V(H)$, $L(v)=\set{1,2}$, and for each $v \in V(G)-V(H)$, $L(v)=\set{2,3}$.  For the sake of contradiction, suppose that $f$ is a proportional $L$-coloring of $G$.  Note that $\eta(1)=4$, so $|f^{-1}(1)|=2$.  Clearly, $f(a)=1$ or $f(a)=2$.  If $f(a)=1$, then $f(b_{i})=2$ for each $i\in[3]$, and $|f^{-1}(1)|=1$.  If $f(a)=2$, then $f(b_{i})=1$ for each $i\in[3]$, and $|f^{-1}(1)|=3$.  In either case we have a contradiction.
\end{proof}  

\begin{lem} \label{pro: cycle in G}
If a graph contains a cycle, then it is not proportionally $(2, \ell)$-choosable for each $\ell \geq 4$.
\end{lem}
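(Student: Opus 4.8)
The plan is to show that if a graph $G$ contains a cycle, then we can build a $(2,4)$-assignment (hence a $(2,\ell)$-assignment for every $\ell \ge 4$) admitting no proportional coloring; by Proposition~\ref{pro: monotonicity} it suffices to handle $\ell = 4$. First I would dispose of the case where $G$ contains a vertex of degree at least $3$: by Lemma~\ref{pro: degree}, such a $G$ is not even proportionally $(2,3)$-choosable, and Proposition~\ref{pro: monotonicity} then rules out $(2,\ell)$-choosability for $\ell \ge 4$ as well (since a $(2,3)$-assignment is a $(2,4)$-assignment). So we may assume $\Delta(G) \le 2$, which means $G$ is a disjoint union of paths and cycles; in particular the cycle $C$ that $G$ contains is a connected component of $G$.

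The heart of the argument is therefore: if $G$ has a cycle $C_n$ as a component, exhibit a bad $(2,4)$-assignment. I would first build the assignment on the cycle $C_n = v_1 v_2 \cdots v_n v_1$ and then assign lists on the rest of $G$ so as not to create new obstructions or, better, so that no color's multiplicity forces a contradiction globally. The cleanest approach is to assign to every vertex of $C_n$ the same list $\{1,2\}$ and to every vertex outside $C_n$ the list $\{3,4\}$ — so colors $1,2$ live only on $C_n$ and colors $3,4$ only elsewhere, decoupling the two parts. On $C_n$ with all lists $\{1,2\}$, a proper $L$-coloring exists iff $n$ is even, and then it is forced to be the alternating $2$-coloring with each class of size $n/2$; since $\eta(1) = \eta(2) = n$, a proportional coloring needs each class to have size $\lfloor n/2 \rfloor = \lceil n/2 \rceil = n/2$, which the alternating coloring does satisfy. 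So this naive assignment is bad only when $n$ is odd. For even $n$ I would instead perturb one vertex's list: set $L(v_1) = \{1,3\}$ (or $\{2,3\}$), keep $L(v_i) = \{1,2\}$ for $2 \le i \le n$, and handle color $3$ on the rest of $G$ carefully. Now on $C_n$, color $3$ is available only at $v_1$; if $f(v_1) = 3$ then $v_2, \dots, v_n$ must be properly $\{1,2\}$-colored along a path of even length $n-1$, forcing $|f^{-1}(1) \cap V(C_n)|$ and $|f^{-1}(2)\cap V(C_n)|$ to be $\lceil (n-1)/2 \rceil$ and $\lfloor(n-1)/2\rfloor$ — and one checks these do not both equal the required $\lfloor \eta(\cdot)/2\rfloor$ or $\lceil\eta(\cdot)/2\rceil$ value once the global multiplicities are pinned down. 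If instead $f(v_1) \in \{1,2\}$, then $v_1 \cdots v_n$ is a properly $\{1,2\}$-colored cycle, again forcing the alternating pattern, and the count of color $3$ (now entirely outside $C_n$) versus its multiplicity gives the contradiction.

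The main obstacle, and the step I would spend the most care on, is bookkeeping the multiplicities $\eta(c)$ once lists are placed on $V(G) \setminus V(C_n)$: I must choose those lists so that (i) colors $3,4$ have multiplicities whose floor/ceiling targets are incompatible with the forced partial coloring coming from $C_n$, while (ii) not accidentally making the outside part itself unsatisfiable in a way that is irrelevant, and (iii) respecting that $G \setminus C_n$ may be empty. The empty case — $G = C_n$ — is the genuinely delicate sub-case and should be isolated and argued directly: for $G = C_n$ with $n$ even, take $L(v_1) = \{1,3\}$ and $L(v_i) = \{1,2\}$ otherwise, note $\eta(1) = n$, $\eta(2) = n-1$, $\eta(3) = 1$, so a proportional coloring needs $|f^{-1}(3)| \in \{0,1\}$ and the remaining path/cycle structure forces either the all-$\{1,2\}$ alternating cycle (then $|f^{-1}(1)| = n/2 \ne n$ only if... check: actually $|f^{-1}(1)|=n/2$, and target is $\lfloor n/2\rfloor$ or $\lceil n/2 \rceil = n/2$, OK, so color $1$ is fine — the issue is color $2$: $\eta(2)=n-1$ odd, target $(n-1\pm1)/2$, i.e. $|f^{-1}(2)| \in \{(n-2)/2, n/2\}$, and alternating gives $n/2$, OK too) — so I must instead use a perturbation of both colors, e.g. $L(v_1)=\{3,4\}$, forcing $f(v_1)\in\{3,4\}$ and leaving an even path on $\{1,2\}$ with $\eta(1)=\eta(2)=n-1$ requiring each class of size exactly $(n-1)/2$, impossible since $n-1$ is odd. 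For odd $n$ the naive all-$\{1,2\}$ assignment already has no proper coloring at all, so we are done. Assembling these cases — odd cycle (trivial), even cycle as isolated component (two-color perturbation), cycle as component with nonempty rest (decoupled palettes $\{1,2\}$ vs $\{3,4\}$, plus the above) — yields the lemma, and I would present them in that order.
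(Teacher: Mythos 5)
Your reduction to $\ell=4$, the disposal of odd cycles, and the extra reduction to $\Delta(G)\le 2$ via Lemma~\ref{pro: degree} are all fine (the last step is harmless but unnecessary: a correct construction never needs the cycle to be a component). The genuine gap is in the even-cycle case, which is the heart of the lemma: the assignments you propose are not actually bad. For $G=C_n$ with $n$ even, $L(v_1)=\{3,4\}$ and $L(v_i)=\{1,2\}$ otherwise, you claim the classes of $1$ and $2$ would have to have size exactly $(n-1)/2$, ``impossible since $n-1$ is odd.'' But a proportional coloring only requires each class to have size $\lfloor\eta(c)/2\rfloor$ or $\lceil\eta(c)/2\rceil$; with $\eta(1)=\eta(2)=n-1$ odd these are $(n-2)/2$ and $n/2$, and the forced alternating coloring of the path $v_2,\dots,v_n$ produces classes of exactly those sizes, while coloring $v_1$ with $3$ uses $3$ once and $4$ zero times, also allowed since $\eta(3)=\eta(4)=1$. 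So this assignment admits a proportional coloring (for $n=4$: colors $3,1,2,1$), and the same floor/ceiling slack defeats your $L(v_1)=\{1,3\}$ variant, as your own computation showed. Your treatment of the ``cycle plus nonempty rest'' case is likewise left as an unverified ``one checks,'' so no working construction for even cycles ever appears.

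The repair is to make the color that goes all the way around the cycle have \emph{even} multiplicity, so that its class size is forced exactly and lands on one of the two alternating independent sets. This is what the paper does: on the even cycle $C$ with vertices $v_1,\dots,v_{2k+2}$ in cyclic order, set $L(v_{2i-1})=\{1,2\}$, $L(v_{2i})=\{1,3\}$, and $L(v)=\{3,4\}$ for $v\in V(G)-V(C)$. Then $\eta(1)=2k+2$ forces $|f^{-1}(1)|=k+1$; since $1$ is unavailable off $C$ and $C$ has only two independent sets of size $k+1$, the class of $1$ is one of the two alternating sets, whence $|f^{-1}(2)|$ equals $0$ or $k+1$, contradicting $0<\lfloor(k+1)/2\rfloor\le|f^{-1}(2)|\le\lceil(k+1)/2\rceil<k+1$. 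Note that this single construction needs no case split on whether the cycle is a component or whether $V(G)\setminus V(C)$ is empty, which eliminates the bookkeeping you flagged as the main obstacle.
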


\begin{proof}
Suppose $G$ is an arbitrary graph that contains a cycle $C$. By Proposition~\ref{pro: monotonicity}, it suffices to show that $G$ is not proportionally $(2,4)$-choosable. If $C$ is an odd cycle, then $G$ is not $2$-colorable; thus, $G$ is not proportionally $(2,4)$-choosable. So, we may suppose that $C$ is an even cycle.

Suppose the vertices of $C$ written in cyclic order are: $v_1, \ldots, v_{2k+2}$ where $k \in \N$.  We will now construct a $(2,4)$-assignment, $L$, for $G$ such that there is no proportional $L$-coloring of $G$. Suppose $L$ is the $(2,4)$-assignment for $G$ given by $L(v_{2i-1}) = \{1, 2\}$ and $L(v_{2i}) = \{1, 3\}$ for each $i \in [k+1]$, and $L(v) = \{3, 4\}$ if $v \in V(G) - V(C)$. Notice that $\eta(1) = 2k+2$ and $\eta(2) = k+1$.  For the sake of contradiction, suppose $f$ is a proportional $L$-coloring of $G$. This implies that $|f^{-1}(1)| = k+1$ and 
$$0 < \left \lfloor \frac{k+1}{2} \right \rfloor \leq |f^{-1}(2)| \leq \left \lceil \frac{k+1}{2} \right \rceil < k+1.$$ 
Since $C$ contains exactly two independent sets of size at least $k+1$ and $1 \notin L(v)$ for each $v \in V(G) - V(C)$, either $f(v_{2i}) = 1$ for each $i \in [k+1]$ or $f(v_{2i-1}) = 1$ for each $i \in [k+1]$. This implies that $|f^{-1}(2)| = k+1$ or $|f^{-1}(2)| = 0$ which in either case is a contradiction.
\end{proof}

\begin{lem} \label{pro: 2K12-5}
If a graph contains a copy of $K_{1,2}+K_{1,2}$, then it is not proportionally $(2,\ell)$-choosable for each $\ell \geq 5$.
\end{lem}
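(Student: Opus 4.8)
The plan is to invoke Proposition~\ref{pro: monotonicity} to reduce to showing that any $G$ containing a copy of $K_{1,2}+K_{1,2}$ fails to be proportionally $(2,5)$-choosable, and then to exhibit one explicit bad $(2,5)$-assignment. Write the copy as two vertex-disjoint stars $H_1$, $H_2$, where $H_i$ has center $a_i$ and leaves $b_i,c_i$, and let $H$ be their union. The design principle is to put a single color, say $1$, into all six of these lists so that $\eta(1)=6$ forces color $1$ onto exactly three vertices, and to hide a second color on the leaves of each star: concretely I would take $L(a_1)=\{1,2\}$, $L(b_1)=L(c_1)=\{1,3\}$, $L(a_2)=\{1,4\}$, $L(b_2)=L(c_2)=\{1,5\}$, and $L(v)=\{2,4\}$ for every $v\in V(G)-V(H)$. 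The crucial feature is that colors $1$, $3$, and $5$ appear only inside $H$, so their multiplicities are $\eta(1)=6$, $\eta(3)=2$, $\eta(5)=2$ no matter what $G-H$ looks like.

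Suppose $f$ were a proportional $L$-coloring. I would argue by forcing. From $\eta(1)=6$, color $1$ is used on exactly three vertices, all of them vertices of $H$; from $\eta(3)=2$, color $3$ is used on exactly one of $b_1,c_1$. If $f(a_1)=1$, both leaves $b_1,c_1$ are forced to $3$, a contradiction, so $f(a_1)=2$; then exactly one of $b_1,c_1$ is colored $1$ and the other $3$, so color $1$ is used exactly once on $V(H_1)$ and hence exactly twice on $V(H_2)$. Running the same argument inside $H_2$: if $f(a_2)=1$ then $b_2,c_2$ are forced to $5$ and color $1$ is used only once on $V(H_2)$, a contradiction; so $f(a_2)=4$, and to reach two uses of color $1$ on $V(H_2)$ we must have $f(b_2)=f(c_2)=1$. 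But then no vertex of $G$ receives color $5$, contradicting $\eta(5)=2$, which requires color $5$ to be used exactly once. Hence no proportional $L$-coloring exists, and since $L$ is a $(2,5)$-assignment, $G$ is not proportionally $(2,5)$-choosable; Proposition~\ref{pro: monotonicity} finishes all $\ell\geq 5$.

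What requires care is mostly bookkeeping: checking that $L$ really is a $(2,5)$-assignment, that each deduced partial coloring is proper, and that the partial coloring forced on $H$ is genuinely the unique one compatible with the multiplicity constraints on colors $1$ and $3$. The one genuinely delicate point --- and the reason we cannot simply quote a subgraph-monotonicity statement in the spirit of Proposition~\ref{lem: monotone}, whose proof may blow up the palette --- is that every vertex of $G-H$ must be given a two-element list drawn from the same five colors without disturbing $\eta(1)$, $\eta(3)$, or $\eta(5)$; assigning all of them the list $\{2,4\}$ does exactly this, and it makes the forcing argument go through verbatim and uniformly in the size and structure of $G-H$ (in particular it covers $G=K_{1,2}+K_{1,2}$ itself). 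Beyond that I do not anticipate any real obstacle.
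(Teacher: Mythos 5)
Your proof is correct and follows essentially the same strategy as the paper's: reduce to $\ell=5$ via Proposition~\ref{pro: monotonicity}, then construct an explicit $(2,5)$-assignment on the two stars (plus a filler list on the remaining vertices that leaves the key multiplicities untouched) and derive a contradiction from the forced color-class sizes. The paper's particular choice ($L(a_1)=L(a_2)=\{1,2\}$, leaves getting $\{1,3\}$ and $\{1,4\}$, outside vertices $\{1,5\}$) makes the forcing slightly shorter since $\eta(2)=\eta(3)=\eta(4)=2$ immediately pins one center to color $1$, but your longer chain through $\eta(1)=6$, $\eta(3)=2$, $\eta(5)=2$ is equally valid.
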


\begin{proof}
Suppose $G$ is a graph that contains two vertex disjoint graphs $H_1$ and $H_2$ that are copies of $K_{1,2}$. By Proposition~\ref{pro: monotonicity}, it suffices to show that $G$ is not proportionally $(2,5)$-choosable. Suppose $H_1$ has bipartition $A_1, B_1$, where $A_1 = \{a_1\}$ and $B_1 = \{b_0, b_1\}$. Suppose $H_2$ has bipartition $A_2, B_2$, where $A_2 = \{a_2\}$ and $B_2 = \{b_2, b_3\}$. We will now construct a $(2,5)$-assignment $L$ for $G$ such that there is no proportional $L$-coloring of $G$.  Suppose $L$ is the $(2,5)$-assignment for $G$ given by $L(a_1) = L(a_2) = \{1, 2\}$, $L(b_0) = L(b_1) = \{1, 3\}$, $L(b_2) = L(b_3) = \{1, 4\}$, and $L(v) = \{1, 5\}$ if $v \in V(G) - V(H_1 + H_2)$. Notice that $\eta(i) = 2$ for $i = 2,3,4$.

For the sake of contradiction, suppose $f$ is a proportional $L$-coloring of $G$. This means that $|f^{-1}(i)| = 1$ for $i = 2,3,4$. Thus, $f(a_1) = 1$ or $f(a_2) = 1$. This implies that $|f^{-1}(3)| = 2$ or $|f^{-1}(4)| = 2$ respectively which in either case is a contradiction.
\end{proof}

We are now ready to prove Theorem~\ref{thm: lis5+} which we restate.

\begin{customthm}{\bf \ref{thm: lis5+}}
For each $\ell \geq 5$, a graph $G$ is proportionally $(2, \ell)$-choosable if and only if $G$ is a linear forest such that the largest component of $G$ has at most $5$ vertices and all other components of $G$ have at most $2$ vertices.
\end{customthm}

\begin{proof}
Throughout the proof, suppose $\ell$ is a fixed natural number satisfying $\ell \geq 5$.  Suppose that $G$ is a linear forest such that the largest component of $G$ has at most 5 vertices and all other components of $G$ have at most 2 vertices. By Theorem~\ref{thm: full2character}, we know $G$ is proportionally $(2,\ell)$-choosable. 

Conversely, suppose that $G$ is proportionally $(2,\ell)$-choosable. By Lemma~\ref{pro: degree} we know that $\Delta (G) \leq 2$, and by Lemma~\ref{pro: cycle in G} we know that $G$ can not contain a cycle. This means that $G$ must be a linear forest. Finally, by Lemma~\ref{pro: 2K12-5} we know that $G$ can not contain a copy of $K_{1,2} + K_{1,2}$ (i.e. $P_3 + P_3$). Thus, $G$ must be a linear forest such that the longest path has at most 5 vertices and all other paths have at most 2 vertices.
\end{proof}

\section{Theorem~\ref{thm: lis4}}

In this section we prove Theorem~\ref{thm: lis4} which we restate.

\begin{customthm}{\bf \ref{thm: lis4}}
A connected graph $G$ is proportionally $(2, 4)$-choosable if and only if $G = P_n$ where $n \leq 5$ or $n = 7$.
\end{customthm}

With the exception of $P_7$, which we will take care of near the end of this section, note that the ``if" direction of Theorem~\ref{thm: lis4} is implied by Theorem~\ref{thm: full2character}.  Conversely, by Lemmas~\ref{pro: degree} and~\ref{pro: cycle in G}, we know that if $G$ is a connected graph that is proportionally $(2,4)$-choosable, $G$ must be a path.  We now focus upon further narrowing down which paths are proportionally $(2,4)$-choosable.  We begin with two useful general results.

\begin{pro} \label{pro: span}
Suppose $1 \leq k \leq \ell$.  Suppose $G$ is a graph and $H$ is a spanning subgraph of $G$. If $H$ is not proportionally $(k, \ell)$-choosable, then $G$ is not proportionally $(k, \ell)$-choosable.
\end{pro}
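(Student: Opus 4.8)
The plan is to exploit the fact that a spanning subgraph shares the vertex set with the ambient graph, so any list assignment for $H$ is verbatim a list assignment for $G$, with the same palette and the same color multiplicities. Concretely, since $H$ is not proportionally $(k,\ell)$-choosable, fix a $(k,\ell)$-assignment $L$ for $H$ that admits no proportional $L$-coloring of $H$. Because $V(H) = V(G)$, this same $L$ is a $(k,\ell)$-assignment for $G$: each list still has size $k$ and still lies in $[\ell]$, and crucially $\mathcal{L} = \bigcup_{v \in V(H)} L(v) = \bigcup_{v \in V(G)} L(v)$, so $\eta_L(c)$ is the same whether computed in $H$ or in $G$.

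The key step is then to argue that a proportional $L$-coloring of $G$ would be a proportional $L$-coloring of $H$. First I would observe that since $E(H) \subseteq E(G)$, any proper $L$-coloring $f$ of $G$ is also a proper $L$-coloring of $H$ (no new monochromatic edges can appear when edges are deleted). Second, the proportionality requirement — that $|f^{-1}(c)| \in \{\lfloor \eta_L(c)/k \rfloor, \lceil \eta_L(c)/k \rceil\}$ for every $c \in \mathcal{L}$ — is literally the same condition for $H$ and $G$, by the multiplicity and palette identities noted above. Hence if $f$ were a proportional $L$-coloring of $G$, it would contradict the choice of $L$ as a witness that $H$ is not proportionally $(k,\ell)$-choosable.

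Putting these together: $G$ admits no proportional $L$-coloring, so $G$ fails to be proportionally $L$-colorable for this particular $(k,\ell)$-assignment $L$, and therefore $G$ is not proportionally $(k,\ell)$-choosable, as desired.

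There is no genuine obstacle here; the only point requiring care is the bookkeeping — making sure the palette and the multiplicities $\eta_L(c)$ genuinely do not change when passing from $H$ to $G$, which is exactly where the hypothesis that $H$ is \emph{spanning} (rather than an arbitrary subgraph) is used. This is in contrast to Proposition~\ref{lem: monotone}, whose proof must rebuild a list assignment for a non-spanning subgraph and may inflate the palette, which is precisely the difficulty the paper flags.
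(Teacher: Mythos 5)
Your proof is correct and follows essentially the same route as the paper's: take a witnessing $(k,\ell)$-assignment for $H$, note it is verbatim a $(k,\ell)$-assignment for $G$ with identical palette and multiplicities, and observe that since $E(H) \subseteq E(G)$ any proportional $L$-coloring of $G$ would also be one of $H$. The only cosmetic difference is that the paper frames this as a contradiction with an arbitrary assignment, while you work directly with a fixed bad assignment; your added remark about where the spanning hypothesis is used is a nice touch but not a different argument.
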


\begin{proof}
Suppose $H$ is not proportionally $(k, \ell)$-choosable. For the sake of contradiction, assume $G$ is proportionally $(k, \ell)$-choosable. Suppose $L$ is an arbitrary $(k,\ell)$-assignment for $H$. Since $V(H) = V(G)$, we have that $L$ is also a $(k,\ell)$-assignment for $G$. It follows that there exists a proportional $L$-coloring, $f$, of $G$. Since $E(H) \subseteq E(G)$, we have that $f$ is also a proportional $L$-coloring of $H$. However, this implies that $H$ is proportionally $(k, \ell)$-choosable, and we have a contradiction.
\end{proof}

\begin{pro} \label{pro: addKk}
Suppose $1 \leq k \leq \ell$.  If $G$ is not proportionally $(k,\ell)$-choosable, then $G+K_k$ is not proportionally $(k,\ell)$-choosable.
\end{pro}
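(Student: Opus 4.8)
The plan is to promote a bad list assignment for $G$ to a bad one for $G + K_k$ by attaching a cheap, fully constrained block of colors to the $K_k$ summand. By hypothesis there is a $(k,\ell)$-assignment $L$ for $G$ admitting no proportional $L$-coloring. Let $u_1, \dots, u_k$ denote the vertices of the $K_k$ summand, and define $L'$ on $G + K_k$ by $L'(v) = L(v)$ for $v \in V(G)$ and $L'(u_i) = [k]$ for each $i \in [k]$. Since $k \leq \ell$ we have $[k] \subseteq [\ell]$, so $\mathcal{L}' \subseteq [\ell]$ and each list has size $k$; thus $L'$ is a genuine $(k,\ell)$-assignment for $G + K_k$.

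Next I would argue by contradiction. Suppose $f$ is a proportional $L'$-coloring of $G + K_k$. Because $\{u_1, \dots, u_k\}$ induces a clique on $k$ vertices, each with list $[k]$, the restriction of $f$ to this set is a bijection onto $[k]$: every color in $[k]$ is used exactly once on the $K_k$ summand, and no color outside $[k]$ is used there. Adopting the convention $\eta_L(c) = 0$ for colors $c$ not appearing in any list of $L$, this gives $\eta_{L'}(c) = \eta_L(c) + k$ for every $c \in [k]$ and $\eta_{L'}(c) = \eta_L(c)$ for every $c \notin [k]$.

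It then remains to check that $g := f|_{V(G)}$ is a proportional $L$-coloring of $G$, which contradicts the choice of $L$. It is a proper $L$-coloring since $E(G) \subseteq E(G + K_k)$ and $L'(v) = L(v)$ on $V(G)$. For the color-class sizes, fix $c \in \mathcal{L}$. If $c \notin [k]$, then $c$ is never used on the $K_k$ summand, so $|g^{-1}(c)| = |f^{-1}(c)| \in \{\floor{\eta_{L'}(c)/k}, \ceil{\eta_{L'}(c)/k}\} = \{\floor{\eta_L(c)/k}, \ceil{\eta_L(c)/k}\}$. If $c \in [k]$, then $c$ is used exactly once on the $K_k$ summand, so $|g^{-1}(c)| = |f^{-1}(c)| - 1$, and combining $|f^{-1}(c)| \in \{\floor{(\eta_L(c)+k)/k}, \ceil{(\eta_L(c)+k)/k}\}$ with the identities $\floor{(m+k)/k} = \floor{m/k} + 1$ and $\ceil{(m+k)/k} = \ceil{m/k} + 1$ again yields $|g^{-1}(c)| \in \{\floor{\eta_L(c)/k}, \ceil{\eta_L(c)/k}\}$. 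Hence $g$ is a proportional $L$-coloring of $G$, the desired contradiction.

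The argument is essentially bookkeeping, and I do not anticipate a real obstacle; the one point that needs a little care is the convention $\eta_L(c) = 0$ for $c$ outside the palette of $L$, which is exactly what makes the relation $\eta_{L'}(c) = \eta_L(c) + k$ hold uniformly over all $c \in [k]$, regardless of whether $c$ already appeared on $G$.
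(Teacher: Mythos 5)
Your proposal is correct and follows essentially the same route as the paper's proof: assign the list $[k]$ to the clique vertices, note the clique forces a bijection onto $[k]$ there, and use the identities $\lfloor (m+k)/k\rfloor = \lfloor m/k\rfloor + 1$ and $\lceil (m+k)/k\rceil = \lceil m/k\rceil + 1$ to show the restriction to $V(G)$ is a proportional coloring, a contradiction. The only differences are notational, plus your slightly more explicit handling of colors outside $[k]$, which the paper leaves implicit.
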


\begin{proof}
Let $H$ be a copy of $K_k$ with vertices $w_1, \ldots, w_k$,
and let $L'$ be a $(k,\ell)$-assignment for $G$ such that there is no proportional $L'$-coloring of $G$.
We construct a $(k,\ell)$-assignment, $L$, for $G+H$ as follows:
\[
L(v)=
\begin{cases}
L'(v)	& \text{if $v\in V(G)$}	\\
\set{1,2, \cdots , k} & \text{if $v\in V(H)$.}
\end{cases}
\]
To prove the desired, we will show there is no proportional $L$-coloring of $G+H$.  For the sake of contradiction, suppose $f$ is a proportional $L$-coloring of $G+H$. Without loss of generality, we may assume that $f(w_c) = c$ for each $ c \in [k]$.
So, $|f^{-1}(c) \cap V(H)|=1$ for each $c\in [k]$. We know $\floor{\eta_{L}(c)/k} \leq |f^{-1}(c)| \leq \ceil{\eta_{L}(c)/k}$ for each $c\in [k]$.  Since $\floor{\eta_{L}(c)/k} = \floor{\eta_{L'}(c)/k} + 1$ and $\ceil{\eta_{L}(c)/k} = \ceil{\eta_{L'}(c)/k} + 1$, restricting the domain of $f$ to $V(G)$ yields a proportional $L'$-coloring of $G$.  This however is a contradiction.
\end{proof}

The following Corollary immediately follows from Proposition~\ref{pro: addKk}.

\begin{cor} \label{cor: addp2}
If $G$ is not proportionally $(2,\ell)$-choosable where $\ell \geq 2$, then $G+P_2$ is not proportionally $(2,\ell)$-choosable.
\end{cor}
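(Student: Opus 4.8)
The plan is to invoke Proposition~\ref{pro: addKk} directly. Recall that $P_2$, the path on two vertices, is precisely the complete graph $K_2$. Hence specializing Proposition~\ref{pro: addKk} to the case $k = 2$ yields: if $G$ is not proportionally $(2,\ell)$-choosable, then $G + K_2 = G + P_2$ is not proportionally $(2,\ell)$-choosable, which is exactly the claimed statement. The one hypothesis of Proposition~\ref{pro: addKk} that must be verified is $1 \leq k \leq \ell$; with $k = 2$ this reduces to $\ell \geq 2$, which is assumed. So there is no real obstacle, and the corollary is immediate.

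If a self-contained argument were wanted instead, one would simply rerun the construction from the proof of Proposition~\ref{pro: addKk} in this special case: take a $(2,\ell)$-assignment $L'$ for $G$ admitting no proportional $L'$-coloring, extend it to a $(2,\ell)$-assignment $L$ for $G+P_2$ by giving the list $\{1,2\}$ to each of the two vertices of the added $P_2$, note that in any proportional $L$-coloring of $G+P_2$ those two adjacent vertices must receive the two distinct colors $1$ and $2$ (one each), so that $\eta_L(c) = \eta_{L'}(c) + 2$ and the color class sizes of $1$ and $2$ outside the added edge are pinned down exactly as for $L'$, and conclude that restricting such a coloring to $V(G)$ would be a proportional $L'$-coloring of $G$ — a contradiction. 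Since Proposition~\ref{pro: addKk} is already in hand, however, no further work is needed.
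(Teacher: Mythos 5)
Your proposal is correct and matches the paper exactly: the paper also obtains Corollary~\ref{cor: addp2} as an immediate consequence of Proposition~\ref{pro: addKk} with $k=2$, using $P_2 = K_2$. No further comment is needed.
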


When it comes to proportional $(2,4)$-choosability, notice that combining Proposition~\ref{pro: span} with Corollary~\ref{cor: addp2} can allow us to prove that many paths are not proportionally $(2,4)$-choosable.  For example, if we know that $P_6$ is not proportionally $(2,4)$-choosable, then Proposition~\ref{pro: span} and Corollary~\ref{cor: addp2} can be used in an easy inductive argument to show that $P_n$ is not proportionally $(2,4)$-choosable whenever $n$ is even and at least 6.  We will now apply this idea to show that $P_n$ is not proportionally $(2,4)$-choosable when $n = 6$ or $n \geq 8$. 

\begin{custompro}{\bf \ref{pro: P6}}
$P_3+P_3$ is not proportionally $(2,4)$-choosable.
\end{custompro}

\begin{proof}
Notice that $P_3 = K_{1,2}$.  Suppose $G_1$ and $G_2$ are vertex disjoint copies of $K_{1,2}$. Suppose $G_1$ has bipartition $\set{v_1}$, $\set{w_1,w_2}$, and suppose $G_2$ has bipartition $\set{v_2}$, $\set{w_3,w_4}$.  Let $G=G_1+G_2$, and let $L$ be the $(2,4)$-assignment for $G$ given by:
$L(v_1)=L(v_2)=\set{1,2}$, $L(w_{1})=L(w_{2})=\set{1,3}$, $L(w_{3})=L(w_{4})=\set{1,4}$.  To prove the desired, we will show there is no proportional $L$-coloring of $G$.  For the sake of contradiction, suppose $f$ is a proportional $L$-coloring of $G$. Since $\eta(2)=2$, either $f(v_1)=2$ or $f(v_2)=2$.
Note that if $f(v_1)=2$, then $f(v_2)=1$ and $f$ uses 4 too many times.  Similarly, if $f(v_2)=2$, then $f(v_1)=1$ and $f$ uses 3 too many times. So, we have a contradiction.
\end{proof}

\begin{cor} \label{cor: basecase}
$P_6$ and $P_8$ are not proportionally $(2,4)$-choosable.
\end{cor}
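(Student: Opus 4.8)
The plan is to obtain both non-choosability statements directly from the machinery already in place, without constructing any new list assignments. First I would note that $P_3+P_3$ occurs as a \emph{spanning} subgraph of $P_6$: writing the vertices of $P_6$ in order as $v_1,v_2,\ldots,v_6$ and deleting the single edge $v_3v_4$ leaves precisely a copy of $P_3$ on $\{v_1,v_2,v_3\}$ together with a copy of $P_3$ on $\{v_4,v_5,v_6\}$, i.e.\ $P_3+P_3$ on the full vertex set of $P_6$. Since Proposition~\ref{pro: P6} tells us that $P_3+P_3$ is not proportionally $(2,4)$-choosable, Proposition~\ref{pro: span} (with $k=2$, $\ell=4$) immediately gives that $P_6$ is not proportionally $(2,4)$-choosable.

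For $P_8$ I would chain this with the disjoint-union result. Having just established that $P_6$ is not proportionally $(2,4)$-choosable, Corollary~\ref{cor: addp2} (again the $\ell=4$ case) yields that $P_6+P_2$ is not proportionally $(2,4)$-choosable. Now $P_6+P_2$ is a spanning subgraph of $P_8$: writing the vertices of $P_8$ in order as $v_1,v_2,\ldots,v_8$ and deleting the edge $v_6v_7$ leaves a copy of $P_6$ on $\{v_1,\ldots,v_6\}$ and a copy of $P_2$ on $\{v_7,v_8\}$, which together span all eight vertices of $P_8$. One more application of Proposition~\ref{pro: span} then shows that $P_8$ is not proportionally $(2,4)$-choosable, completing the corollary.

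There is essentially no obstacle in this argument; the only point requiring a moment's care is verifying the two spanning-subgraph relationships — in particular that $P_6+P_2$ has the same number of vertices as $P_8$, so that deleting one edge of $P_8$ genuinely produces $P_6+P_2$ as a \emph{spanning} subgraph rather than merely a subgraph. This two-step pattern (pass from a path to a disjoint union containing an extra $P_2$ via an edge deletion, invoke Corollary~\ref{cor: addp2}, then pass back up to a longer path via Proposition~\ref{pro: span}) is exactly the inductive device alluded to just before the statement, and it is what will later let us conclude that $P_n$ is not proportionally $(2,4)$-choosable for every even $n\ge 6$.
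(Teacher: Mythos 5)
Your argument is correct and is essentially identical to the paper's own proof: both obtain $P_6$ from Proposition~\ref{pro: P6} and Proposition~\ref{pro: span} via the spanning subgraph $P_3+P_3$, and then obtain $P_8$ by applying Corollary~\ref{cor: addp2} to get $P_6+P_2$ and invoking Proposition~\ref{pro: span} once more. The extra detail you include about which edges are deleted to exhibit the spanning subgraphs is a harmless elaboration of the same reasoning.
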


\begin{proof}
Since $P_3+P_3$ is a spanning subgraph of $P_6$, Proposition~\ref{pro: span} implies $P_6$ is not proportionally $(2,4)$-choosable.
By Corollary~\ref{cor: addp2}, we know $P_6+P_2$ is not proportionally $(2,4)$-choosable.  Proposition~\ref{pro: span} then implies $P_8$ is also not proportionally $(2,4)$-choosable.
\end{proof}

\begin{pro} \label{pro: P9}
$P_9$ is not proportionally $(2,4)$-choosable.
\end{pro}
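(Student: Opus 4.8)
The plan is to construct a single $(2,4)$-assignment $L$ for $P_9 = v_1 \cdots v_9$ that admits no proportional $L$-coloring. Since $P_9$ is connected, Lemmas~\ref{pro: degree} and~\ref{pro: cycle in G} contribute nothing new here (they only say it is a path), so the whole burden is on producing and checking this one assignment; together with Corollary~\ref{cor: basecase} and the inductive $+P_2$ argument built from Corollary~\ref{cor: addp2} and Proposition~\ref{pro: span}, it completes the ``only if'' direction of Theorem~\ref{thm: lis4} for $n = 9$.

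First I would record the arithmetic that drives everything. With $k=2$ and $|V(P_9)| = 9$, any $(2,4)$-assignment $L$ whose palette is $\{1,2,3,4\}$ satisfies $\sum_{c=1}^{4}\eta(c) = 18$, and in a proportional $L$-coloring $f$ we need $\sum_{c}|f^{-1}(c)| = 9$ with each $|f^{-1}(c)|$ equal to $\lfloor \eta(c)/2\rfloor$ or $\lceil \eta(c)/2\rceil$; in particular, whenever $\eta(c)$ is even the size of the color class of $c$ is forced to equal $\eta(c)/2$. I would then choose $L$ so that two colors, say $3$ and $4$, have multiplicity exactly $2$ — so $|f^{-1}(3)| = |f^{-1}(4)| = 1$ is forced, exactly as the multiplicity-$2$ colors were used in the proof of Proposition~\ref{pro: P6} — with color $1$ appearing on almost every list: concretely, an assignment in which most vertices carry $\{1,2\}$, one carries $\{1,3\}$, one carries $\{1,4\}$, and one carries $\{3,4\}$, with these three ``special'' vertices placed so that the $\{3,4\}$-vertex separates the other two along the path. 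The reason for forcing $3$ and $4$ to be used exactly once is that it pins down which of the special vertices must be colored $1$, and then propagating properness along the runs of $\{1,2\}$-vertices (on such a run the coloring alternates, so it is essentially rigid) should force the number of vertices colored $1$ away from the required value $\lceil \eta(1)/2\rceil$.

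The verification would be a short case analysis: the $\{3,4\}$-vertex is colored $3$ or $4$; in the first case $|f^{-1}(3)| = 1$ forces the $\{1,3\}$-vertex to take color $1$ and then the $\{1,4\}$-vertex to take color $4$, and symmetrically in the second case; in each subcase one then chases the alternation forced along the $\{1,2\}$-blocks across the edges joining them to the special vertices and tallies the vertices colored $1$ to reach the contradiction. This is precisely the step where it matters that $P_9$ is a path rather than, say, $P_3 + P_3 + P_3$: it is the edges joining the blocks that make the assignment fail.

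The main obstacle is choosing the positions of the $\{1,3\}$-, $\{1,4\}$- and $\{3,4\}$-vertices correctly. The dangerous colorings to rule out are those that park color $1$ on a large independent set of $P_9$ (for instance $\{v_1, v_3, v_5, v_7, v_9\}$); for the ``natural'' candidate assignments such a coloring survives, so the real content is arranging the special vertices so that, once colors $3$ and $4$ are forced, no independent set of the appropriate size remains available for color $1$. Once an $L$ with this property is fixed, the remainder is a routine check of three or four subcases, and one concludes that $P_9$ is not proportionally $(2,4)$-choosable.
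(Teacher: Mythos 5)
Your overall strategy (exhibit one bad $(2,4)$-assignment for $P_9$ and check it by hand) is the same as the paper's, but the specific family of assignments you commit to can never produce a bad one, so there is a genuine gap. Your plan fixes the lists to be six copies of $\{1,2\}$ together with one $\{1,3\}$, one $\{1,4\}$, and one $\{3,4\}$, the last separating the other two. Then $\eta(1)=8$, $\eta(2)=6$, $\eta(3)=\eta(4)=2$, so the class sizes are forced to be $4,3,1,1$. Whichever of $3,4$ the $\{3,4\}$-vertex receives, exactly one of the other two special vertices is forced to color $1$ and the remaining one to its non-$1$ color; it remains to place exactly three $1$'s and three $2$'s on the six $\{1,2\}$-vertices. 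Those vertices form blocks separated by the three special vertices; within a block the colors alternate, and the only interaction with the special vertices is that a block endpoint adjacent to the unique special vertex colored $1$ cannot receive $1$ (a special vertex colored $3$ or $4$ constrains nothing). An even block contributes exactly half its length in $1$'s regardless of pattern; an odd block of length $m$ contributes $\lfloor m/2\rfloor$ or $\lceil m/2\rceil$ and loses the larger option only if it has such a forbidden endpoint. If $r$ is the number of odd blocks ($r\in\{0,2,4\}$, since the block lengths sum to $6$), the unboosted total is $3-r/2$, so you need to boost exactly $r/2$ odd blocks; this fails only if more than $r/2$ odd blocks touch the one special vertex colored $1$. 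A vertex of a path touches at most two blocks, so failure in a single case forces $r=2$ with both odd blocks adjacent to that vertex; failure in \emph{both} cases would require both odd blocks adjacent to the $\{1,3\}$-vertex and also both adjacent to the $\{1,4\}$-vertex (or, when $r=4$, three odd blocks adjacent to one vertex), which is impossible, in particular because the $\{3,4\}$-vertex separates them. Hence for every placement at least one of your two cases extends to a proportional $L$-coloring: the difficulty you flag (``the dangerous colorings survive for the natural candidates'') is not a matter of positioning the special vertices, it is unavoidable for this list structure.

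The paper's assignment succeeds precisely because it uses a different multiplicity profile: $L(v_1)=L(v_3)=L(v_5)=\{1,2\}$, $L(v_2)=L(v_4)=L(v_7)=\{1,3\}$, $L(v_6)=L(v_8)=\{1,4\}$, $L(v_9)=\{2,3\}$, so that $\eta(1)=8$, $\eta(2)=\eta(3)=4$, $\eta(4)=2$ and three colors have rigid class sizes ($|f^{-1}(1)|=4$, $|f^{-1}(3)|=2$, $|f^{-1}(4)|=1$). Only one color has multiplicity $2$, and the $\{2,3\}$ list sits at an endpoint rather than between the other special lists; the forcing then runs through $f(v_7)=3$, the exclusion of $f(v_6)=1$ via the count $|f^{-1}(1)|=4$, hence $f(v_8)=1$, $f(v_6)=4$, $f(v_1)=f(v_3)=f(v_5)=1$, and finally an overfull color class for $3$. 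If you want to repair your write-up, you need an assignment with this kind of asymmetry (even-multiplicity colors of multiplicity $4$ whose class sizes are also pinned), not just two colors of multiplicity $2$ as in the proof of Proposition~\ref{pro: P6}; that trick does not transfer to $P_9$ when all remaining lists equal $\{1,2\}$.
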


\begin{proof}
Suppose $G=P_9$.  Suppose the vertices of $G$ in order are: $v_1, v_2, \ldots , v_9$. To prove the desired, we will construct a $(2,4)$-assignment, $L$, for $G$ with the property that there is no proportional $L$-coloring of $G$.  Let $L(v_1)=L(v_3)=L(v_5)=\{1,2\}$, $L(v_2)=L(v_4)=L(v_7)=\{1,3\}$, $L(v_6)=L(v_8)=\{1,4\}$, and $L(v_9)=\{2,3\}$.  We claim that $G$ is not proportionally $L$-colorable. Suppose for the sake of contradiction that $f$ is a proportional $L$-coloring of $G$. Note that since $\eta(4)=2$ it must be that either $f(v_6) =4$ and $f(v_8)=1$ or $f(v_8) =4$ and $f(v_6)=1$. So, it must be that $f(v_7) =3$. Note that $\eta(1) = 8$ which means that $|f^{-1}(1)|=4$. If $f(v_6) =1$, then $|f^{-1}(1)| < 4$ which is a contradiction. So, $f(v_8) = 1$ and $f(v_6)=4$ which means that $f(v_1) = f(v_3)=f(v_5) = 1$.  This implies $f(v_2) = f(v_4) =3$ which means $|f^{-1}(3)| \geq 3$. This however contradicts the fact that $|f^{-1}(3)|=2$ since $\eta(3) = 4$.
\end{proof}

\begin{lem} \label{pro: induction}
$P_n$ is not proportionally $(2,4)$-choosable for each $n\geq 8$.
\end{lem}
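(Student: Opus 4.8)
The plan is a straightforward induction on $n$ that propagates non-choosability upward by two vertices at a time, using the machinery already assembled: Corollary~\ref{cor: addp2} (adding a disjoint $P_2$ preserves failure of proportional $(2,4)$-choosability) together with Proposition~\ref{pro: span} (a graph with a non-proportionally-$(2,4)$-choosable spanning subgraph is itself not proportionally $(2,4)$-choosable). The two base cases are $n = 8$, which is Corollary~\ref{cor: basecase}, and $n = 9$, which is Proposition~\ref{pro: P9}. Having base cases of both parities is precisely what makes an induction that steps by $2$ work.

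For the inductive step, fix $n \geq 10$ and assume the statement holds for all $m$ with $8 \leq m < n$; since $n - 2 \geq 8$, the hypothesis gives that $P_{n-2}$ is not proportionally $(2,4)$-choosable. By Corollary~\ref{cor: addp2}, $P_{n-2} + P_2$ is not proportionally $(2,4)$-choosable. Finally, $P_{n-2} + P_2$ is a spanning subgraph of $P_n$: if the vertices of $P_n$ in order are $v_1, \ldots, v_n$, then deleting the edge $\{v_{n-2}, v_{n-1}\}$ leaves a path on $v_1, \ldots, v_{n-2}$ and a disjoint path on $v_{n-1}, v_n$, so both graphs have vertex set $\{v_1,\dots,v_n\}$. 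Proposition~\ref{pro: span} then yields that $P_n$ is not proportionally $(2,4)$-choosable, completing the induction.

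I do not expect any real obstacle: the substantive content was handled earlier in Corollary~\ref{cor: basecase} and Proposition~\ref{pro: P9} (supplying base cases of each parity) and in Proposition~\ref{pro: span} and Corollary~\ref{cor: addp2}. The only points needing (routine) care are that both an even and an odd base case are available and that the induction hypothesis always reaches back to a value $\geq 8$; both are immediate from $n \geq 10$. Combined with the observation, already noted before the statement, that a connected proportionally $(2,4)$-choosable graph must be a path, this lemma is exactly what is needed to finish the ``only if'' direction of Theorem~\ref{thm: lis4} once $P_6$ is excluded.
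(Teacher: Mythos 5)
Your proof is correct and follows essentially the same route as the paper: induction on $n$ with base cases $n=8$ (Corollary~\ref{cor: basecase}) and $n=9$ (Proposition~\ref{pro: P9}), and the inductive step combining Corollary~\ref{cor: addp2} with Proposition~\ref{pro: span} via the spanning subgraph $P_{n-2}+P_2$ of $P_n$. The only difference is that you spell out explicitly why $P_{n-2}+P_2$ spans $P_n$, which the paper leaves implicit.
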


\begin{proof}
Let $G=P_n$ where $n \geq 8$.  We prove the desired by induction on $n$.  For the base case, notice that the result follows when $n=8,9$ by Corollary~\ref{cor: basecase} and Proposition~\ref{pro: P9}.

Now, suppose $n\geq 10$ and the desired result holds for each integer greater than or equal to 8 and less than $n$.
We know $n-2\geq 8$, so $P_{n-2}$ is not proportionally $(2,4)$-choosable.  By Corollary~\ref{cor: addp2}, $P_{n-2}+P_2$ is not proportionally $(2,4)$-choosable.  Proposition~\ref{pro: span} then implies that $G$ is not proportionally $(2,4)$-choosable.
\end{proof}

We need one more result before we prove Theorem~\ref{thm: lis4}.  As mentioned at the start of this Section, we need to show $P_7$ is proportionally $(2,4)$-choosable which we will do with the aid of a computer.

\begin{pro} \label{pro: P7}
$P_7$ is proportionally $(2,4)$-choosable.
\end{pro}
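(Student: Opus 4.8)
The plan is to verify by exhaustive computer search that every $(2,4)$-assignment $L$ for $P_7$ admits a proportional $L$-coloring. First I would observe that, since a $(2,4)$-assignment associates to each vertex one of the $\binom{4}{2}=6$ two-element subsets of $[4]$, there are only $6^7 = 279936$ list assignments to check, and this is trivially small for a computer. In fact one can cut this down substantially: by symmetry it suffices to consider list assignments up to the action of the symmetric group $S_4$ permuting the colors, and up to the reversal automorphism of $P_7$, so in practice only a few thousand orbit representatives need be examined. But even the brute-force enumeration is fast enough that no such reduction is logically necessary for the proof.

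The key steps, in order, would be: (1) enumerate all functions $L\colon \{v_1,\dots,v_7\}\to \binom{[4]}{2}$; (2) for each such $L$, compute $\eta_L(c)=\lvert\{i : c\in L(v_i)\}\rvert$ for each $c\in[4]$, and hence the target interval $[\lfloor \eta_L(c)/2\rfloor,\lceil \eta_L(c)/2\rceil]$ for the size of each color class; (3) enumerate all $2^7=128$ colorings $f$ with $f(v_i)\in L(v_i)$ (a proper $L$-coloring just requires $f(v_i)\neq f(v_{i+1})$, which is cheap to test since $P_7$ is a path); (4) check whether some proper $f$ has $\lvert f^{-1}(c)\rvert$ in the target interval for every $c$; (5) report that in every case such an $f$ exists. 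Since by Lemma~\ref{pro: degree} and Lemma~\ref{pro: cycle in G} a connected proportionally $(2,4)$-choosable graph must be a path, and by Lemma~\ref{pro: induction} together with Corollary~\ref{cor: basecase} no $P_n$ with $n=6$ or $n\geq 8$ works, this computation is the last ingredient: combined with Theorem~\ref{thm: full2character} (which handles $P_n$ for $n\leq 5$), it completes the proof of Theorem~\ref{thm: lis4}.

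The main obstacle is not mathematical depth but presentation: we must be confident the search is genuinely exhaustive and correctly implements the definition of proportional coloring. The subtle point to get right is the rounding in step (2) — the requirement is that $\lvert f^{-1}(c)\rvert \in \{\lfloor \eta_L(c)/2\rfloor, \lceil \eta_L(c)/2\rceil\}$ for \emph{every} color $c$ in the palette, including colors whose lists happen to be used by only one or two vertices, and including the implicit constraint $\sum_c \lvert f^{-1}(c)\rvert = 7$. One should double check that the code does not accidentally skip colors with $\eta_L(c)=0$ (there are none here since every color could in principle be omitted from all lists, but then it is not in the palette and imposes no constraint) and that it correctly handles the parity cases where $\lfloor\eta/2\rfloor=\lceil\eta/2\rceil$. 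Beyond that, I would include the short source code in an appendix or supplement and simply state that running it confirms $P_7$ is proportionally $(2,4)$-choosable; a hand-checkable spot argument for a few representative ``hard'' assignments (e.g. the ones analogous to the obstructions used for $P_6$ and $P_9$) could be added for reassurance, but is not strictly required.
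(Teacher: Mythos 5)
Your proposal matches the paper's proof essentially exactly: the paper verifies the statement by an exhaustive computer search over all $\binom{4}{2}^7 = 279936$ possible $(2,4)$-assignments for $P_7$, checking for each the $2^7 = 128$ candidate colorings against the proportionality bounds, with the Python source included in Appendix A. The symmetry reduction you mention is not used in the paper, but as you note it is unnecessary, so the two arguments are the same in substance.
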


\begin{proof}
Suppose $G = P_7$.  Clearly, there are $\binom{4}{2}^7 = 279936$ possible $(2,4)$-assignments for $G$.  Also, for each $(2,4)$-assignment, $L$, for $G$ there are $2^7 = 128$ possible $L$-colorings of $G$ (some of which may not be proper).  We used a Python program (see Appendix A for the code) that found at least one proportional coloring of $G$ for each of $279936$ possible $(2,4)$-assignments for $G$.  It immediately follows that $G$ is proportionally $(2,4)$-choosable.
\end{proof}

It should be noted that Propositions~\ref{pro: span} and~\ref{pro: P7} immediately imply that $P_6 + P_1$ is proportionally $(2,4)$-choosable (i.e. Proposition~\ref{pro: lis4}).  We are now ready to prove Theorem~\ref{thm: lis4}.

\begin{proof}
Suppose that $G = P_n$ where $n \leq 5$ or $n =7$. When $n \leq 5$ we know by Theorem~\ref{thm: full2character} that $G$ is proportionally $(2,4)$-choosable. When $n =7$ we know by Proposition~\ref{pro: P7} that $G$ is proportionally $(2,4)$-choosable. 

Conversely, suppose that $G$ is proportionally $(2,4)$-choosable. Since $G$ is connected, Lemmas~\ref{pro: degree} and~\ref{pro: cycle in G} imply that $G = P_n$ for some $n \in \N$. Finally, by Corollary~\ref{cor: basecase} and Lemma~\ref{pro: induction} we know that $n \leq 7$ and $n \neq 6$. Thus, $G = P_n$ where $n \leq 5$ or $n = 7$.
\end{proof}

\section{Theorem~\ref{thm: lis3}}

In this section we prove Theorem~\ref{thm: lis3} which we restate.

\begin{customthm}{\bf \ref{thm: lis3}}
A connected graph $G$ is proportionally $(2,3)$-choosable if and only if $G= P_n$ for some $n \in \N$.
\end{customthm}

Proving the ``if" direction will require a bit of effort.  So, we begin by concentrating on the ``only if" direction. By Lemma~\ref{pro: degree}, we know that if $G$ is a connected graph that is proportionally $(2,3)$-choosable, $G$ must be a cycle or path.  So, we need to address cycles.

\begin{lem} \label{pro: cycle is G}
If $G = C_n$ for some $n \geq 3$, then $G$ is not proportionally $(2, 3)$-choosable.
\end{lem}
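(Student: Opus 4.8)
The plan is to show, for every $n \geq 3$, that $C_n$ admits a $(2,3)$-assignment $L$ with no proportional $L$-coloring. First I would split into the easy odd case: if $n$ is odd, then $C_n$ is not $2$-colorable, so the constant assignment $L(v) = \{1,2\}$ already has no proper $L$-coloring at all, hence no proportional one. So the substance is the even case $n = 2k$ with $k \geq 2$, and here I would reuse the construction from the proof of Lemma~\ref{pro: cycle in G} essentially verbatim, since that proof only used the existence of an even cycle as a subgraph and the palette there has size $4$, but in fact one can compress it to a $3$-element palette because for a cycle that \emph{is} the whole graph there are no ``outside'' vertices needing colors $3$ and $4$.

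Concretely, write the vertices of $C_{2k}$ in cyclic order as $v_1, \ldots, v_{2k}$ and set $L(v_{2i-1}) = \{1,2\}$ and $L(v_{2i}) = \{1,3\}$ for each $i \in [k]$. Then the palette is $\{1,2,3\}$, so $L$ is a legitimate $(2,3)$-assignment, and $\eta(1) = 2k$, $\eta(2) = k$, $\eta(3) = k$. Suppose for contradiction $f$ is a proportional $L$-coloring; then $|f^{-1}(1)| = k$ and $\lfloor k/2 \rfloor \leq |f^{-1}(2)|, |f^{-1}(3)| \leq \lceil k/2 \rceil$, and since $\eta(2)+\eta(3) = 2k = |V(C_{2k})| - |f^{-1}(1)| + |f^{-1}(1)|$... more cleanly: the $k$ odd-indexed vertices form an independent set and the $k$ even-indexed vertices form an independent set, these are the only two independent sets of size $k$ in $C_{2k}$, and color $1$ is the only color available to \emph{both} parities. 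Since $|f^{-1}(1)| = k$ and the vertices colored $1$ form an independent set of size $k$, the color class of $1$ must be exactly one of the two parity classes. If $f^{-1}(1)$ is the set of odd-indexed vertices, then every even-indexed vertex gets color $3$ and no vertex gets color $2$, so $|f^{-1}(2)| = 0 < \lfloor k/2 \rfloor$ (using $k \geq 2$), a contradiction; symmetrically if $f^{-1}(1)$ is the even-indexed vertices then $|f^{-1}(3)| = 0$, again a contradiction. Hence no proportional $L$-coloring exists.

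I should double-check the one potential edge concern: when $k = 2$ (i.e., $C_4$), $\lfloor k/2 \rfloor = 1 > 0$, so the argument still bites; and for $n = 3$ the odd-cycle argument applies, so the two cases together cover all $n \geq 3$. I do not anticipate a real obstacle here — the main point is simply recognizing that the palette-$4$ construction of Lemma~\ref{pro: cycle in G} collapses to palette $3$ when there is nothing outside the cycle, and verifying the small-$k$ boundary. One stylistic choice is whether to invoke Lemma~\ref{pro: cycle in G} directly; but that lemma is stated for $\ell \geq 4$, so it does not apply to $(2,3)$-choosability, which is exactly why a fresh (if nearly identical) argument is needed, and I would present it self-contained as above.
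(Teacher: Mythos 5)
Your proof is correct and follows essentially the same route as the paper's: the odd case via non-$2$-colorability, and for even cycles the alternating $\{1,2\}/\{1,3\}$ assignment, forcing the color class of $1$ to be one of the two parity classes and then contradicting the lower bound $\lfloor k/2\rfloor \geq 1$ on the usage of color $2$ or $3$. The only differences are cosmetic (parameterizing $n=2k$ versus $n=2k+2$, and checking both parity cases rather than invoking symmetry).
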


\begin{proof}
Suppose the vertices of $G$ written in cyclic order are: $v_1, \ldots, v_n$. If $n$ is odd, then $G$ is not $2$-colorable; hence, $G$ is not proportionally $(2,3)$-choosable.  Now, suppose $n = 2k+2$ for $k \in \mathbb{N}$. To prove the desired, we will construct a $(2,3)$-assignment $L$ for $G$ such that there is no proportional $L$-coloring of $G$.  Let $L$ be the $(2,3)$-assignment for $G$ given by: $L(v_{2i}) = \{1, 2\}$ and $L(v_{2i - 1}) = \{1, 3\}$ for each $i \in [k+1]$. For the sake of contradiction, suppose $f$ is a proportional $L$-coloring of $G$. Notice that $\eta(1) = 2k+2$, so $|f^{-1}(1)| = k+1$; also, $G$ contains precisely two independent sets of size at least $k+1$.

Without loss of generality, suppose $f(v_{2i-1}) = 1$ for each $i \in [k+1]$. Notice that $\eta(3) = k+1$ implies that 
$$|f^{-1}(3)| \geq \left \lfloor \frac{k+1}{2} \right \rfloor > 0.$$ However, $3$ is not an element of $L(v_{2i})$ for all $i \in [k+1]$.  So, $|f^{-1}(3)| = 0$ which is a contradiction.
\end{proof}

We will now concentrate on proving that $P_n$ is proportionally $(2,3)$-choosable for each $n \in \N$.  We first make an observation: for any $2$-assignment $L$ for a graph $G$, the number of colors with odd multiplicity in $\mathcal{L}$ is even.  So, if $L$ is a $(2,3)$ assignment for $G$, either none of the colors in $[3]$ have odd multiplicity or exactly two of the colors in $[3]$ have odd multiplicity.  In order to prove Proposition~\ref{pro: evenmult} below, we will prove four Observations and three Lemmas within the body of the proof of Proposition~\ref{pro: evenmult}.

\begin{pro} \label{pro: evenmult}
Suppose $n \in \N$.  Suppose $G = P_n$ and $L$ is a $(2,3)$-assignment for $G$ where all the colors in $\mathcal{L}$ have even multiplicity.  Then, $G$ is proportionally $L$-colorable.
\end{pro}

\begin{proof}
We prove the result by induction on $n$.  Notice that the desired result holds for all $n \leq 5$ by Theorem~\ref{thm: full2character}.

So, suppose that $n \geq 6$ and the desired result holds for all natural numbers less than $n$.  For the sake of contradiction, suppose that $L$ is a $(2,3)$-assignment for $G$ where all the colors in $\mathcal{L}$ have even multiplicity, and suppose there is no proportional $L$-coloring of $G$.  Suppose that the vertices of $G$ in order are $v_1, v_2, \ldots, v_n$.  Also, suppose that $\eta(1)=2a_1$, $\eta(2)=2a_2$, and $\eta(3)=2a_3$.   The strategy of the proof is to determine as much as we can about what $L$ must look like and then show that a proportional $L$-coloring of $G$ must actually exist.

\textbf{Observation 1:} For each $i \in [n-1]$, it must be that $L(v_i) \neq L(v_{i+1})$.  To see why this is so, suppose that there is a $j \in [n-1]$ such that $L(v_j) = L(v_{j+1})$.  Without loss of generality, suppose $L(v_j)=\{1,2\}$.  Let $G'$ be the path on $n-2$ vertices obtained from $G$ by deleting $v_j$ and $v_{j+1}$, and then if $1<j<n-1$, connecting $v_{j-1}$ and $v_{j+2}$ with an edge.  Let $L'$ be the $(2,3)$-assignment for $G'$ obtained by restricting the domain of $L$ to $V(G')$.  By the inductive hypothesis, there is a proportional $L'$-coloring $f$ of $G'$.  Clearly, $|f^{-1}(i)| = a_i - 1$ for $i=1,2$ and $|f^{-1}(3)|=a_3$.  Also, if $1<j<n-1$, $f(v_{j-1}) \neq f(v_{j+2})$.  So, we can find a proportional $L$-coloring of $G$ by coloring the vertices of $G$ that are in $V(G')$ according to $f$ and then coloring $v_j$ and $v_{j+1}$ with 1 and 2 respectively or with 2 and 1 respectively.  So, we have a contradiction.     

Based upon Observation 1, we may assume without loss of generality that $L(v_1) = \{1,2\}$ and $L(v_2) = \{1,3 \}$.

\textbf{Observation 2:} We claim that $L$ assigns the list $\{2,3\}$ to some vertex in $G$.  To see why this is so, suppose that $L$ does not assign the list $\{2,3\}$ to any element in $V(G)$.  By Observation 1 and the fact that all the elements in $\mathcal{L}$ have even multiplicity, we know that $n = 4m$ for some $m \in \N$.  We also know that $L(v_i) = \{1,2 \}$ when $i$ is odd, and $L(v_i) = \{1,3\}$ when $i$ is even.  Now, consider the proper $L$-coloring, $f$, for $G$ given by
\[f(v_i) = \begin{cases} 
      1 & \text{if $i$ is odd and $i \leq 2m$}  \\
      3 & \text{if $i$ is even and $i \leq 2m$} \\
      2 & \text{if $i$ is odd and $i > 2m$} \\
			1 & \text{if $i$ is even and $i > 2m$.}
   \end{cases}
\] 
It is easy to see that $f$ is also a proportional $L$-coloring of $G$ (i.e. $|f^{-1}(1)| = 2m$ and $|f^{-1}(2)|=|f^{-1}(3)|=m$).  So, we have a contradiction. 

Based on Observation 2, we may suppose there is a $t \geq 1$ such that $L$ assigns the list $\{2,3\}$ to the following vertices in $V(G)$: $v_{k_1}, v_{k_2}, \ldots, v_{k_t}$ where $3 \leq k_1 < k_2 < \cdots < k_t \leq n$.

\textbf{Observation 3:} We claim that $L(v_{k_1 - 1})= \{1,2 \}$ (equivalently $k_1$ is even).  To see why this is so, suppose that $L(v_{k_1 - 1})= \{1,3 \}$.  This implies that $k_1$ is odd.  So, for some $l \in \N$ we have that either: (1) $k_1 - 1 = 4l$ or (2) $k_1 - 1 = 4l-2$.  We will derive a contradiction in each case.  For case (1) let $G' = G - \{v_i : i \in [4l] \}$.  Let $L'$ be the $(2,3)$-assignment for $G'$ obtained by restricting the domain of $L$ to $V(G')$.  By the inductive hypothesis, there is a proportional $L'$-coloring $f$ of $G'$.  Clearly, $|f^{-1}(1)|=a_1 - 2l$ and $|f^{-1}(i)| = a_i - l$ for $i=2,3$.  Since $f(v_{4l+1}) = f(v_{k_1}) \neq 1$, we can find a proportional $L$-coloring of $G$ by coloring the vertices of $G$ that are in $V(G')$ according to $f$ and then coloring the vertices in $\{v_i : i \in [4l] \}$ according to $g: \{v_i : i \in [4l] \} \rightarrow [3]$ where 
\[g(v_i) = \begin{cases} 
      1 & \text{if $i$ is odd and $i \leq 2l$}  \\
      3 & \text{if $i$ is even and $i \leq 2l$} \\
      2 & \text{if $i$ is odd and $i > 2l$} \\
			1 & \text{if $i$ is even and $i > 2l$.}
   \end{cases}
\]  
Having constructed a proportional $L$-coloring of $G$, we have reached a contradiction.  For case (2) let $G' = G - \{v_i : i \in [4l-1] \}$.  Let $L'$ be the $(2,3)$-assignment for $G'$ obtained by restricting the domain of $L$ to $V(G')$.  By the inductive hypothesis, there is a proportional $L'$-coloring $f$ of $G'$.  Clearly, $|f^{-1}(1)|=a_1 - (2l-1)$ and $|f^{-1}(i)| = a_i - l$ for $i=2,3$.  Let $h_1: \{v_i : i \in [4l-1] \} \rightarrow [3]$ be given by 
\[h_1(v_i) = \begin{cases} 
      1 & \text{if $i$ is odd and $i \leq 2l-2$}  \\
      3 & \text{if $i$ is even and $i \leq 2l-2$} \\
      2 & \text{if $i$ is odd and $2l-2<i \leq 4l-2$} \\
			1 & \text{if $i$ is even and $2l-2 < i \leq 4l-2$} \\
			3 & \text{if $i = 4l-1$.}
   \end{cases}
\] 
Let $h_2: \{v_i : i \in [4l-1] \} \rightarrow [3]$ be given by 
\[h_2(v_i) = \begin{cases} 
      1 & \text{if $i$ is odd and $i \leq 2l$}  \\
      3 & \text{if $i$ is even and $i \leq 2l$} \\
      2 & \text{if $i$ is odd and $2l<i \leq 4l-1$} \\
			1 & \text{if $i$ is even and $2l < i \leq 4l-1$.} 
   \end{cases}
\] 
It is easy to see that we can construct a proportional $L$-coloring of $G$ by coloring the vertices of $G$ that are in $V(G')$ according to $f$ and then coloring the vertices in $\{v_i : i \in [4l-1] \}$ according to $h_1$ or $h_2$.  Having reached a contradiction in both cases, the proof of Observation 3 is complete.

\textbf{Observation 4:} We claim that $n \geq k_1 + 1$ and $L(v_{k_1 + 1}) = \{1,2 \}$.  We have that $n \geq k_1 + 1$ since $1$ would have odd multiplicity if $n = k_1$.  For the sake of contradiction, suppose that $L(v_{k_1 + 1}) = \{1,3 \}$.  Let $G'$ be the graph obtained from $G$ by deleting the vertices in $\{v_{k_1-2}, v_{k_1-1}, v_{k_1} \}$ and then connecting $v_{k_1-3}$ and $v_{k_1+1}$ with an edge.  Let $L'$ be the $(2,3)$-assignment for $G'$ obtained by restricting the domain of $L$ to $V(G')$.  By the inductive hypothesis, we know there is a proportional $L'$-coloring $f$ of $G'$.  Clearly, $|f^{-1}(i)| = a_i - 1$ for $i \in [3]$.  Let $h$ be the $L$-coloring for $G$ obtained by coloring the vertices of $G$ that are in $V(G')$ according to $f$ and then coloring the vertices $v_{k_1-2}, v_{k_1-1}, v_{k_1}$ with 3, 1, and 2 respectively.  Since $f(v_{k_{1}-3}) \neq 3$, and $f(v_{k_{1}+1}) \neq 2$, we have that $h$ is a proportional $L$-coloring of $G$ which is a contradiction.  

\begin{lem} \label{lem: A}
For each $q \in [t]$, $n \geq k_q+1$.  Moreover, $k_q$ is even and $L(v_{k_q-1}) = L(v_{k_q+1}) = \{1,2\}$.  
\end{lem}

\begin{proof}
Our proof will be by induction on $q$ where $1 \leq q \leq t$.  We have proven the base case in Observations 3 and 4.

So, assume that $1 < q \leq t$ and the desired statement holds for all natural numbers less than $q$.  By the inductive hypothesis, we have that $k_q \geq k_{q-1}+2$.  We begin by showing that $L(v_{k_q-1}) = \{1,2\}$ which by the inductive hypothesis and Observation 1 would immediately imply that $k_q$ is even. 

Suppose for the sake of contradiction that $L(v_{k_q - 1})= \{1,3 \}$.  This implies that $k_q$ is odd and $k_q \geq k_{q-1}+3$.  Since $k_{q-1}$ is even, we know that one of the following two cases holds: (1) $k_q - k_{q-1} - 1 = 4l$ for some $l \in \N$, or (2) $k_q - k_{q-1} - 1 = 4l-2$ for some $l \in \N$.  We will derive a contradiction in each case. For case (1), let $G'$ be the graph obtained from $G$ by deleting the vertices in $\{v_i : k_{q-1}+1 \leq i \leq k_q-1 \}$ and then connecting $v_{k_{q-1}}$ and $v_{k_q}$ with an edge.  Let $L'$ be the $(2,3)$-assignment for $G'$ obtained by restricting the domain of $L$ to $V(G')$.  We know that there is a proportional $L'$-coloring $f$ of $G'$.  Clearly, $|f^{-1}(1)|=a_1 - 2l$ and $|f^{-1}(i)| = a_i - l$ for $i=2,3$.  Since $f(v_{k_{q-1}}) \neq 1$ and $f(v_{k_q}) \neq 1$, we can find a proportional $L$-coloring of $G$ by coloring the vertices of $G$ that are in $V(G')$ according to $f$ and then coloring the vertices in $\{v_i : k_{q-1}+1 \leq i \leq k_q-1 \}$ according to $g: \{v_i : k_{q-1}+1 \leq i \leq k_q-1 \} \rightarrow [3]$ where 
\[g(v_i) = \begin{cases} 
      1 & \text{if $i$ is odd and $k_{q-1}+1 \leq i \leq k_{q-1} + 2l$}  \\
      3 & \text{if $i$ is even and $k_{q-1}+1 \leq i \leq k_{q-1} + 2l$} \\
      2 & \text{if $i$ is odd and $k_{q-1} + 2l <i \leq k_q-1$} \\
			1 & \text{if $i$ is even and $k_{q-1} + 2l <i \leq k_q-1$.}
   \end{cases}
\]  
Having constructed a proportional $L$-coloring of $G$, we have reached a contradiction.  For case (2) let $G'$ be the graph obtained from $G$ by deleting the vertices in $\{v_i : k_{q-1} \leq i \leq k_q-1 \}$ and then connecting $v_{k_{q-1}-1}$ and $v_{k_q}$ with an edge.  Let $L'$ be the $(2,3)$-assignment for $G'$ obtained by restricting the domain of $L$ to $V(G')$.  We know there is a proportional $L'$-coloring $f$ of $G'$.  Clearly, $|f^{-1}(1)|=a_1 - (2l-1)$ and $|f^{-1}(i)| = a_i - l$ for $i=2,3$.  Let $h: \{v_i : k_{q-1} \leq i \leq k_q-1 \} \rightarrow [3]$ be given by 
\[h(v_i) = \begin{cases} 
      1 & \text{if $i$ is odd and $k_{q-1} \leq i \leq k_{q-1}+2l-2$}  \\
      3 & \text{if $i$ is even and $k_{q-1} \leq i \leq k_{q-1}+2l-2$} \\
      2 & \text{if $i$ is odd and $k_{q-1}+ 2l-2<i \leq k_q-1$} \\
			1 & \text{if $i$ is even and $k_{q-1}+ 2l-2 < i \leq k_q-1$.} 
   \end{cases}
\] 
Since $f(v_{k_{q-1}-1}) \neq 3$ and $f(v_{k_q}) \neq 1$, we can construct a proportional $L$-coloring of $G$ by coloring the vertices of $G$ that are in $V(G')$ according to $f$ and then coloring the vertices in $\{v_i : k_{q-1} \leq i \leq k_q-1 \}$ according to $h$.  Having reached a contradiction in both cases, we conclude that $L(v_{k_q-1}) = \{1,2\}$ and $k_q$ is even.

We will now show that $n \geq k_q + 1$ and $L(v_{k_q + 1}) = \{1,2\}$.  First, consider the case where $k_q - k_{q-1} = 2$.  In this case let $\gamma$ be the largest element of $[k_q]$ such that $L(v_{\gamma})=\{1,3 \}$.  We know $\gamma$ is even and exists since $L(v_2)=\{1,3\}$.  Moreover, $k_q - \gamma \geq 4$.  We will prove the desired when: (1) $k_q - \gamma = 4l$ for some $l \in \N$ and (2) $k_q - \gamma = 4l+2$ for some $l \in \N$.  For (1), let $G'$ be the graph obtained from $G$ by deleting the vertices in $\{v_i : \gamma+1 \leq i \leq k_q \}$ and then connecting $v_{\gamma}$ and $v_{k_q+1}$ with an edge (if $v_{k_q+1}$ exists).  Let $L'$ be the $(2,3)$-assignment for $G'$ obtained by restricting the domain of $L$ to $V(G')$.  We know that there is a proportional $L'$-coloring $f$ of $G'$.  Clearly, $|f^{-1}(2)|=a_2 - 2l$ and $|f^{-1}(1)| = a_i - l$ for $i=1,3$.  Let $g:\{v_i : \gamma+1 \leq i \leq k_q \} \rightarrow [3]$ be given by 
\[g(v_i) = \begin{cases} 
      2 & \text{if $i$ is odd and $\gamma+1 \leq i \leq \gamma + 2l$}  \\
      3 & \text{if $i$ is even and $\gamma+1 \leq i \leq \gamma + 2l$} \\
      1 & \text{if $i$ is odd and $\gamma + 2l <i \leq k_q$} \\
			2 & \text{if $i$ is even and $\gamma + 2l <i \leq k_q$.}
   \end{cases}
\]   
Consider the $L$-coloring $h$ of $G$ obtained by coloring the vertices of $G$ that are in $V(G')$ according to $f$ and then coloring the vertices in $\{v_i : \gamma+1 \leq i \leq k_q \}$ according to $g$.  Since $f(v_{\gamma}) \neq 2$ the only way that $h$ is not a proportional $L$-coloring of $G$ is if $v_{k_q+1}$ exists and $f(v_{k_q+1})=2$.  The desired result immediately follows by Observation 1 and the fact that $2 \in L(v_{k_q+1})$. 

For (2) let $G'$ be the graph obtained from $G$ by deleting the vertices in $\{v_i : \gamma \leq i \leq k_q \}$ and then connecting $v_{\gamma-1}$ and $v_{k_q+1}$ with an edge (if $v_{k_q+1}$ exists).  Let $L'$ be the $(2,3)$-assignment for $G'$ obtained by restricting the domain of $L$ to $V(G')$.  We know that there is a proportional $L'$-coloring $f$ of $G'$.  Clearly, $|f^{-1}(2)|=a_2 - (2l+1)$ and $|f^{-1}(1)| = a_i - (l+1)$ for $i=1,3$.  Let $g:\{v_i : \gamma \leq i \leq k_q \} \rightarrow [3]$ be given by 
\[g(v_i) = \begin{cases} 
      3 & \text{if $i$ is even and $\gamma \leq i \leq \gamma+2l$}  \\
      2 & \text{if $i$ is odd and $\gamma \leq i \leq \gamma+2l$} \\
      1 & \text{if $i$ is odd and $\gamma+ 2l < i \leq k_q$} \\
			2 & \text{if $i$ is even and $\gamma+ 2l < i \leq k_q$.} 
   \end{cases}
\] 
Consider the $L$-coloring, $h$, of $G$ obtained by coloring the vertices of $G$ that are in $V(G')$ according to $f$ and then coloring the vertices in $\{v_i : \gamma \leq i \leq k_q \}$ according to $g$.  Since $f(v_{\gamma-1}) \neq 3$ the only way that $h$ is not a proportional $L$-coloring of $G$ is if $v_{k_q+1}$ exists and $f(v_{k_q+1})=2$.  The desired result immediately follows by Observation 1 and the fact that $2 \in L(v_{k_q+1})$.  

Finally, suppose that $k_q - k_{q-1} > 2$.  In this case we know $v_{k_q-2} = \{1,3\}$.  Let $G'$ be the graph obtained from $G$ by deleting the vertices in $\{v_{k_q-2}, v_{k_q-1}, v_{k_q} \}$ and then connecting $v_{k_{q}-3}$ and $v_{k_q+1}$ with an edge (if $v_{k_q+1}$ exists).  Let $L'$ be the $(2,3)$-assignment for $G'$ obtained by restricting the domain of $L$ to $V(G')$.  We know there is a proportional $L'$-coloring $f$ of $G'$.  Clearly, $|f^{-1}(i)| = a_i - 1$ for $i \in [3]$.  Let $h_1$ be the $L$-coloring for $G$ obtained by coloring the vertices of $G$ that are in $V(G')$ according to $f$ and then coloring the vertices $v_{k_q-2}, v_{k_q-1}, v_{k_q}$ with 1, 2, and 3 respectively.  Let $h_2$ be the $L$-coloring for $G$ obtained by coloring the vertices of $G$ that are in $V(G')$ according to $f$ and then coloring the vertices $v_{k_q-2}, v_{k_q-1}, v_{k_q}$ with 3, 1, and 2 respectively.  We know that neither $h_1$ nor $h_2$ is a proportional $L$-coloring of $G$. Notice that the only way that both $h_1$ and $h_2$ are not proportional $L$-colorings of $G$ is if $v_{k_q+1}$ exists, $f(v_{k_{q}-3})= 1$, and $f(v_{k_{q}+1})=2$.  So, $v_{k_q+1}$ exists (i.e. $n \geq k_q + 1$), and $2 \in L(v_{k_q+1})$.  Observation 1 then implies that $L(v_{k_q + 1}) = \{1,2\}$.

This completes the induction step and our proof is complete.       
\end{proof}

\begin{lem} \label{lem: R}
 For any $i \in [n]$, $L(v_i) = \{1,2\}$ if and only if $i$ is odd. Consequently, $L(v_i) \neq \{1,2\}$ if and only if $i$ is even.  
\end{lem}

\begin{proof}
Suppose that $i \in [n]$ and $i$ is odd.  For the sake of contradiction suppose that $L(v_i) \neq \{1,2\}$.  Lemma~\ref{lem: A} implies that for each $q \in [t]$, $i \neq k_q$.  So, $L(v_i) = \{1,3\}$.  Let $v_{k_a}$ be the element of $\{v_{k_q} : q \in [t]\}$ that is closest in distance to $v_i$.  Without loss of generality, suppose that $k_a > i$.  Since $i$ is odd and $k_a$ is even, Observation 1 implies that $L(v_{k_a - 1})= \{1,3\}$ which contradicts Lemma~\ref{lem: A}.

Conversely, suppose $L(v_i) = \{1,2\}$, and for the sake of contradiction $i$ is even.  Let $v_{k_a}$ be the element of $\{v_{k_q} : q \in [t]\}$ that is closest in distance to $v_i$.  Without loss of generality, suppose that $k_a > i$.  Since $i$ is even and $k_a$ is even, Observation 1 implies that $L(v_{k_a - 1})= \{1,3\}$ which contradicts Lemma~\ref{lem: A}. 
\end{proof}  

\begin{lem} \label{lem: B}
$L(v_n) = \{1,2\}.$  Consequently, $n$ is odd.
\end{lem} 

\begin{proof} 
For the sake of contradiction, suppose that $L(v_n) \neq \{1,2\}$.  Lemma~\ref{lem: A} then implies that $L(v_n) = \{1,3 \}$.  This means that $n$ is even by Lemma~\ref{lem: R}.  We will derive a contradiction in the following cases: (1) $n - k_t = 4l$ for some $l \in \N$, and (2) $n - k_t = 4l-2$ for some $l \in \N$. For case (1), let $G' = G - \{v_i : k_{t}+1 \leq i \leq n \}$.  Let $L'$ be the $(2,3)$-assignment for $G'$ obtained by restricting the domain of $L$ to $V(G')$.  We know that there is a proportional $L'$-coloring $f$ of $G'$.  Clearly, $|f^{-1}(1)|=a_1 - 2l$ and $|f^{-1}(i)| = a_i - l$ for $i=2,3$.  Since $f(v_{k_{t}}) \neq 1$, we can find a proportional $L$-coloring of $G$ by coloring the vertices of $G$ that are in $V(G')$ according to $f$ and then coloring the vertices in $\{v_i : k_{t}+1 \leq i \leq n \}$ according to $g: \{v_i : k_{t}+1 \leq i \leq n \} \rightarrow [3]$ where 
\[g(v_i) = \begin{cases} 
      1 & \text{if $i$ is odd and $k_{t}+1 \leq i \leq k_{t} + 2l$}  \\
      3 & \text{if $i$ is even and $k_{t}+1 \leq i \leq k_{t} + 2l$} \\
      2 & \text{if $i$ is odd and $k_{t} + 2l <i \leq n$} \\
			1 & \text{if $i$ is even and $k_{t} + 2l <i \leq n$.}
   \end{cases}
\]  
Having constructed a proportional $L$-coloring of $G$, we have reached a contradiction.  For case (2) let $G' = G - \{v_i : k_{t} \leq i \leq n \}$.  Let $L'$ be the $(2,3)$-assignment for $G'$ obtained by restricting the domain of $L$ to $V(G')$.  We know there is a proportional $L'$-coloring $f$ of $G'$.  Clearly, $|f^{-1}(1)|=a_1 - (2l-1)$ and $|f^{-1}(i)| = a_i - l$ for $i=2,3$.  Let $h: \{v_i : k_{t} \leq i \leq n \} \rightarrow [3]$ be given by 
\[h(v_i) = \begin{cases} 
      1 & \text{if $i$ is odd and $k_t \leq i \leq k_{t}+2l-2$}  \\
      3 & \text{if $i$ is even and $k_t \leq i \leq k_t+2l-2$} \\
      2 & \text{if $i$ is odd and $k_t+ 2l-2<i \leq n$} \\
			1 & \text{if $i$ is even and $k_t+ 2l-2 < i \leq n$.} 
   \end{cases}
\] 
It is easy to see that since $f(v_{k_t - 1}) \neq 3$, we can construct a proportional $L$-coloring of $G$ by coloring the vertices of $G$ that are in $V(G')$ according to $f$ and then coloring the vertices in $\{v_i : k_{t} \leq i \leq n \}$ according to $h$.  Having reached a contradiction in all cases, we conclude that $L(v_n) = \{1,2\}$.  It immediately follows that $n$ is odd as well.
\end{proof}

We now present two definitions.  In what follows, suppose $H = P_m$ where $m$ is an even natural number.  Suppose the vertices of $H$ in order are $w_1, w_2, \ldots, w_m$.  Suppose also that $K$ is the $(2,3)$-assignment for $H$ given by: $L(w_m) = \{2,3 \}$, $L(w_i) = \{1,2\}$ when $i$ is odd, and $L(w_i) = \{1,3 \}$ when $i$ is even and less than $m$.  We say $f: V(H) \rightarrow [3]$ is an \emph{$\alpha_2$-coloring} of $H$ if 
\[f(w_i) = \begin{cases} 
      1 & \text{if $i$ is odd}  \\
      3 & \text{if $i$ is even and $i <m$} \\
      2 & \text{if $i = m$.} 
   \end{cases}
\] 
Notice an $\alpha_2$-coloring of $H$ is a proper $K$-coloring of $H$ that uses: one $m/2$ times, three $(m/2-1)$ times, and two once.  We say $f: V(H) \rightarrow [3]$ is an \emph{$\alpha_3$-coloring} of $H$ if 
\[f(w_i) = \begin{cases} 
      2 & \text{if $i$ is odd}  \\
      1 & \text{if $i$ is even and $i <m$} \\
      3 & \text{if $i = m$.} 
   \end{cases}
\] 
Notice an $\alpha_3$-coloring of $H$ is a proper $K$-coloring of $H$ that uses: one $(m/2-1)$ times, two $m/2$ times, and three once.   

One key property to notice is that if $D$ is the disjoint union of two copies of $P_m$ and $R$ is the $(2,3)$-assignment for $D$ obtained by using the rule for $K$ on each component of $D$, then if we use an $\alpha_2$-coloring to color one component of $D$ and an $\alpha_3$-coloring to color the other component of $D$, we obtain a proportional $R$-coloring of $D$. 

We are now ready to finish the proof.  Let $z$ be the smallest element in $[(n-1)/2]$ with the property $L(v_z) \neq L(v_{n-z+1})$.  Notice that $z$ must exist, for if $L(v_i) = L(v_{n-i+1})$ for each $i \in [(n-1)/2]$, then the colors in $L(v_{(n+1)/2})$ would have odd multiplicity.  Since $z$ and $n-z+1$ have the same parity, we know that $z$ is even.  We may also assume without loss of generality that $L(v_z) = \{2,3\}$ and $L(v_{n-z+1}) = \{1,3\}$.  This means that $z = k_q$ for some $q \in [t]$.  Let 
$$G' = G - \left (\{v_i : i \in [z-1] \} \bigcup \{v_{n-i+1} : i \in [z-1] \} \right ).$$  
Let $L'$ be the $(2,3)$-assignment for $G'$ obtained by restricting the domain of $L$ to $V(G')$.  By the definition of $z$, note $\eta_{L'}(i)$ is even for each $i \in [3]$.  By the inductive hypothesis, there is a proportional $L'$-coloring $f$ of $G'$.  We now describe how to extend $f$ to a proportional $L$-coloring of $G$ which will give us a contradiction and complete the proof.

Let $k_0=0$.  If $q \geq 2$, for each $j \in [q-1]$ consider the vertices in the sets: $A_j = \{v_i : k_{j-1} + 1 \leq i \leq k_j \}$ and $B_j = \{v_{n-i+1} : k_{j-1} + 1 \leq i \leq k_j \}$.  Consider $G[A_j]$, letting $w_s = v_{k_{j-1} + s}$ for each $s \in [k_j - k_{j-1}]$, color $G[A_j]$ with the $\alpha_2$-coloring of $G[A_j]$.  Consider $G[B_j]$, letting $w_s = v_{n - k_{j-1} + 1 - s}$ for each $s \in [k_j - k_{j-1}]$, color $G[B_j]$ with the $\alpha_3$-coloring of $G[B_j]$.  After doing this for each $j \in [q-1]$ our coloring is still proper since an $\alpha_2$-coloring starts with 1 and ends with 2, and an $\alpha_3$-coloring starts with 2 and ends with 3.

Finally, consider the vertices in the sets: $A_q = \{v_i : k_{q-1} + 1 \leq i \leq k_q - 1 \}$ and $B_q = \{v_{n-i+1} : k_{q-1} + 1 \leq i \leq k_q - 1 \}$.  Let $h_1 : A_q \rightarrow [3]$ be given by 
\[h_1(v_i) = \begin{cases} 
      1 & \text{if $i$ is odd}  \\
      3 & \text{if $i$ is even}  
   \end{cases}
\]   
Let $h_2 : B_q \rightarrow [3]$ be given by 
\[h_2(v_i) = \begin{cases} 
      2 & \text{if $i$ is odd}  \\
      1 & \text{if $i$ is even}  
   \end{cases}
\]   
Since we colored $v_{k_{q-1}}$ with 2 (if $q \geq 2$), $f(v_z) \neq 1$, we colored $v_{n - k_{q-1} + 1}$ with 3 (if $q \geq 2$), and $f(v_{n-z+1}) \neq 2$, coloring $G[A_q]$ and $G[B_q]$ according to $h_1$ and $h_2$ respectively completes a proportional $L$-coloring of $G$.  Our proof of Proposition~\ref{pro: evenmult} is now complete.
\end{proof}

\begin{lem} \label{lem: finpath} 
Suppose $n \in \N$.  If $G = P_n$,  then $G$ is proportionally $(2,3)$-choosable.
\end{lem}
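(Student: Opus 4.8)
The plan is to prove this by strong induction on $n$, using Proposition~\ref{pro: evenmult} to dispose of the ``balanced'' parities and reducing the remaining parities to shorter paths. The base cases $n \le 5$ are contained in Theorem~\ref{thm: full2character}. For the inductive step, fix a $(2,3)$-assignment $L$ for $G = P_n$ with vertices $v_1, \dots, v_n$ in order. Since the number of colours of odd multiplicity is always even, either every colour of $\mathcal L$ has even multiplicity --- and then Proposition~\ref{pro: evenmult} is exactly what we need --- or precisely two colours have odd multiplicity; relabelling, assume $\eta(1)$ and $\eta(2)$ are odd while $\eta(3)$ is even.

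First I would dispatch the case $\eta(3) = 0$: then every list is $\{1,2\}$ and $n$ is odd, so any alternating proper $2$-colouring of $P_n$ has colour classes of sizes $\lfloor n/2\rfloor$ and $\lceil n/2\rceil$ and is therefore a proportional $L$-colouring. So assume $\eta(3) \ge 2$. Here I would peel a short prefix off the path: delete $v_1$, and if helpful also $v_2$ and $v_3$, to obtain a shorter path $P_{n'}$ whose induced $(2,3)$-assignment $L'$ either has all even multiplicities (finish by Proposition~\ref{pro: evenmult}) or again has exactly two odd-multiplicity colours (finish by the induction hypothesis); take a proportional $L'$-colouring $f'$ and extend it by colouring the peeled vertices greedily from the new endpoint back toward $v_1$. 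The point of the reduction is that removing one or two vertices shifts the target class sizes $\lfloor \eta(c)/2\rfloor,\lceil\eta(c)/2\rceil$ by a predictable amount, and the slack coming from the two odd colours should be exactly soaked up by the freedom in recolouring the peeled vertices. I would organize the argument by the pattern $L(v_1), L(v_2), \dots$: if $L(v_1)=\{1,2\}$, deleting $v_1$ alone already makes every multiplicity even; if $\{L(v_1),L(v_2)\}=\{\{1,3\},\{2,3\}\}$, deleting both does; if $L(v_1)=L(v_2)$, deleting both leaves two odd colours on $P_{n-2}$ and we recurse; the other mixed patterns are handled similarly, using the reflection symmetry $v_i\leftrightarrow v_{n+1-i}$ of $P_n$ to peel from whichever end (or both ends) cooperates.

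The step I expect to be the main obstacle is exactly this last piece of bookkeeping. Because $\eta(3)$ is even, a proportional colouring must use colour $3$ \emph{exactly} $\eta(3)/2$ times, so when we glue $f'$ back together we must hit that value on the nose; but the colour $f'$ happens to place on the new endpoint, together with how $f'$ spends its own odd-colour slack, can clash with the colour we are forced to give the peeled vertex. Handling this will require care about how many vertices to peel and from which end, and I anticipate needing to strengthen the induction hypothesis slightly --- to produce a proportional colouring with prescribed behaviour at a chosen endpoint --- or, when a straightforward extension fails, to perform a short Kempe-type recolouring along the path. I would resolve the finitely many awkward configurations by a case analysis parallel to, but lighter than, the one embedded in the proof of Proposition~\ref{pro: evenmult}.
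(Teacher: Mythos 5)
There is a genuine gap, and it is exactly the one you flag but do not close. Several of your peeling reductions do go through (deleting $v_1$ when $L(v_1)=\{1,2\}$, deleting $v_1,v_2$ when their lists are $\{1,3\},\{2,3\}$ or equal), but the remaining mixed patterns are not ``handled similarly.'' Take odd-multiplicity colors $1,2$ and $L(v_1)=\{1,3\}$ with $L(v_2)\neq\{2,3\}$: deleting $v_1$ flips the parity of $\eta(3)$, so you must recurse on a $(2,3)$-assignment in which $3$ now has odd multiplicity, and when you glue back, proportionality for $L$ forces color $3$ to be used \emph{exactly} $\eta(3)/2$ times. If the recursive coloring $f'$ already spent its slack the wrong way (e.g.\ used $3$ only $\eta(3)/2-1$ times), then $v_1$ is forced to receive $3$, which clashes whenever $f'(v_2)=3$; in the other sub-case $v_1$ is forced to receive $1$ and clashes whenever $f'(v_2)=1$. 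The plain induction hypothesis gives you no control over $f'(v_2)$ or over how $f'$ distributes the odd-color slack, and reflection does not rescue you, since both endpoint lists can contain color $3$ (e.g.\ lists $\{1,3\},\dots,\{2,3\}$ with $\eta(1),\eta(2)$ odd). Your proposed remedies --- strengthening the induction hypothesis to prescribe endpoint behavior, or a Kempe-type recoloring --- are exactly where the real work would lie, and they are left unspecified and unverified, so the proof is incomplete as written.

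The paper sidesteps all of this bookkeeping with a one-line move in the opposite direction: instead of peeling vertices off $P_n$, it \emph{adds} a pendant vertex $w$ at an endpoint with list $\{1,2\}$ (the two odd-multiplicity colors). In the resulting $(2,3)$-assignment for $P_{n+1}$ every color has even multiplicity, so Proposition~\ref{pro: evenmult} (which holds for paths of every length, with no induction needed in Lemma~\ref{lem: finpath} itself) yields a proportional coloring; restricting it to $V(P_n)$ removes one vertex colored $1$ or $2$, and since $\eta_L(1),\eta_L(2)$ are odd their class sizes are allowed to drop by one, while the class of color $3$ is untouched and already equals $\eta_L(3)/2$. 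No parity case analysis or endpoint control is required. If you want to salvage your deletion scheme, you would need to prove the stronger statement you allude to (a proportional coloring avoiding a prescribed color at a prescribed endpoint, or some equivalent), which is substantially more work than the augmentation trick.
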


\begin{proof}
Suppose that $L$ is an arbitrary $(2,3)$-assignment for $G$.  We must show that $G$ is proportionally $L$-colorable.  In the case that the multiplicity of each color in $\mathcal{L}$ is even, the result is implied by Proposition~\ref{pro: evenmult}.

So, assume without loss of generality that $\eta_L(1)$ and $\eta_L(2)$ are odd.  Let $G'$ be the path obtained from $G$ by adding a new vertex $w$ and connecting $w$ to an endpoint of $G$ with an edge.  Let $L'$ be the $(2,3)$-assignment for $G'$ given by $L'(v) = L(v)$ for each $v \in V(G)$ and $L(w) = \{1,2 \}$.  By Proposition~\ref{pro: evenmult}, there is a proportional $L'$-coloring $f$ of $G'$.  Restricting the domain of $f$ to $V(G)$ yields a proportional $L$-coloring of $G$.   
\end{proof}

We are now ready to prove Theorem~\ref{thm: lis3}.

\begin{proof}
Suppose a connected graph $G$ is proportionally $(2,3)$-choosable. By Lemmas~\ref{pro: degree} and~\ref{pro: cycle is G}, we have $G$ is a path.  Conversely, suppose $G = P_n$. By Lemma~\ref{lem: finpath}, $G$ must be proportionally $(2,3)$-choosable.
\end{proof}

Finally we prove Proposition~\ref{pro: lis3} to illustrate the difficulty with giving a complete characterization of the proportionally $(2,3)$-choosable graphs.

\begin{custompro}{\bf \ref{pro: lis3}}
$C_4 + P_1$ is proportionally $(2,3)$-choosable.
\end{custompro}

\begin{proof}
Let $G = C_4 + P_1$, where the vertices of the copy of $C_4$ used to form $G$ written in cyclic order are: $v_1, v_2, v_3, v_4$, and the vertex set of the copy of $P_1$ used to form $G$ is $\{u_1\}$. For the sake of contradiction, assume $G$ is not proportionally $(2,3)$-choosable. Suppose $L$ is a $(2,3)$-assignment for $G$ for which $G$ is not proportionally $L$-colorable, and suppose $\eta_L(i) = a_i$ for each $i \in [3]$.

We first claim that $L(v_i) \neq L(v_{i+1})$ for each $i \in [3]$ and $L(v_1) \neq L(v_4)$. To see why, without loss of generality suppose for the sake of contradiction that $L(v_1) = L(v_2) = \{1, 2\}$. Let $G' = G - \{v_1, v_2\}$, and let $L'$ be the $(2,3)$-assignment for $G'$ obtained by restricting the domain of $L$ to $V(G')$. Since $G'$ is a copy of $P_2 + P_1$, there exists a proportional $L'$-coloring $f$ of $G'$ by Proposition~\ref{pro: span} and Theorem~\ref{thm: lis3}. Notice that $\lfloor a_i/2 \rfloor - 1 \leq |f^{-1}(i)| \leq \lceil a_i/2 \rceil - 1$ for $i = 1,2$, and $\lfloor a_3/2 \rfloor \leq |f^{-1}(3)| \leq \lceil a_3/2 \rceil$. Furthermore, since $f(v_3) \neq f(v_4)$, we can find a proportional $L$-coloring of $G$ by coloring the vertices in $V(G')$ according to $f$, then coloring $v_1$ and $v_2$ with $1$ and $2$ respectively or with $2$ and $1$ respectively; hence, we have a contradiction.

We may now assume without loss of generality that $L(v_1) = \{1, 2\}$ and $L(v_2) = \{1, 3\}$. We claim that $L(v_3) = \{1, 2\}$. To see why, suppose for the sake of contradiction that $L(v_3) \neq \{1, 2\}$. By the most recent argument, we have that $L(v_3) = \{2, 3\}$. Let $G' = G - \{v_1, v_2, v_3\}$, and let $L'$ be the $(2,3)$-assignment for $G'$ obtained by restricting the domain of $L$ to $V(G')$. Since $G'$ is a copy of $P_1+P_1$, there exists a proportional $L'$-coloring $f$ of $G'$ by Proposition~\ref{pro: span} and Theorem~\ref{thm: lis3}. Notice that $\lfloor a_i/2 \rfloor - 1 \leq |f^{-1}(i)| \leq \lceil a_i/2 \rceil - 1$ for $i \in [3]$. If $f(v_4) \neq 2$, we can find a proportional $L$-coloring of $G$ by coloring the vertices in $V(G')$ according to $f$, then coloring $v_1, v_2, v_3$ with $1, 3, 2$ respectively or with $2, 1, 3$ respectively. So, suppose $f(v_4) = 2$. This means $L'(v_4) = \{a, 2\}$ for some $a \in \{1, 3\}$. We claim there must be an $L'$-coloring $f'$ of $G'$ such that $f'(v_4) \neq 2$. If $2 \in L'(u_1)$, then we can define $f'$ such that $f'(v_4) = a$ and $f'(u_1) = 2$. If $2 \not\in L'(u_1)$, then we know that $L'(u_1) = \{a, c\} = \{1,3\}$. So, we can define a proportional $L'$-coloring $f'$ of $G'$ such that $f'(v_4) = a$ and $f'(u_1) = c$. We can then find a proportional $L$-coloring of $G$ by coloring the vertices in $V(G')$ according to $f'$, then coloring $v_1, v_2, v_3$ with $1,3,2$ respectively or with $2,1,3$ respectively; hence, we have a contradiction.

By a similar argument, one can deduce that $L(v_4) = \{1, 3\}$.

Finally, we have three cases for $L(u_1)$: (1) $L(u_1) = \{1, 2\}$, (2) $L(u_1) = \{1, 3\}$, or (3) $L(u_1) = \{2, 3\}$. For case (1), we can find a proportional $L$-coloring of $G$ by coloring $v_1, v_2, v_3, v_4, u_1$ with $2,1,2,3,1$ respectively. For case (2), we can find a proportional $L$-coloring of $G$ by coloring $v_1, v_2, v_3, v_4, u_1$ with $1,3,2,3,1$ respectively. For case (3), we can find a proportional $L$-coloring of $G$ by coloring $v_1, v_2, v_3, v_4, u_1$ with $1,3,1,3,2$ respectively. Having reached a contradiction in each case, we have that $G$ is proportionally $(2,3)$-choosable.
\end{proof}

{\bf Acknowledgment.}  The authors would like to thank Hemanshu Kaul, Michael Pelsmajer, and Jonathan Sprague for their helpful comments on this paper.  The authors would also like to thank Carlos Villeda for many helpful conversations.

\appendix

\section{Appendix}

Suppose $G=P_7$.  For each possible $(2,4)$-assignment, $L$, for $G$ the following program determines whether there exists a proportional $L$-coloring of $G$. If there is a $(2,4)$-assignment, $L$, for $G$ for which there is no proportional $L$-coloring, then the function \texttt{bad\_2\_4\_assignment\_P7()} returns $L$. Otherwise, the function returns an empty list, and $G$ is proportionally $(2,4)$-choosable.
Since the output of \texttt{bad\_2\_4\_assignment\_P7()} is an empty list, $P_7$ is proportionally $(2,4)$-choosable.

\begin{verbatim}
import math

def proportional_exists_recursive(L, arr, color_counts, usage_bounds):
    # Base Case
    if len(arr) == len(L):
        for i in range(1, len(color_counts)):
            if color_counts[i] < usage_bounds[0][i]: return False
        return True
    
    # Recursive Case
    found = False
    for color in L[len(arr)]:
        # Check if coloring is found, improper, or not proportional
        if found: break
        if color == arr[-1]: continue
        if color_counts[color] + 1 > usage_bounds[1][color]: continue
        
        color_counts[color] += 1
        found = proportional_exists_recursive(L, arr + [color],
        color_counts, usage_bounds)
        color_counts[color] -= 1
    
    if found: return True
    return False

def proportional_list_coloring_exists_P(L, m, k):
    usage_bounds = [[0]*(k + 1), [0]*(k + 1)] # Bounds on multiplicity
    
    color_frequencies = [0]*(k + 1)
    for L_v in L: # Find how often each color shows up
        for color in L_v:
            color_frequencies[color] += 1
    
    for i in range(1, len(color_frequencies)): # Determine bounds
        usage_bounds[0][i] = math.floor(color_frequencies[i] / m)
        usage_bounds[1][i] = math.ceil(color_frequencies[i] / m)
    
    found = False
    for color in L[0]: # Begin search for proportional L-coloring
        if found: break
        color_counts = [0]*(k + 1)
        color_counts[color] = 1
        found = proportional_exists_recursive(L, [color],
        color_counts, usage_bounds)
    
    if found: return True # Proportional L-coloring exists
    return False # Does not exist

def bad_2_4_assignment_P7():
    possible_lists = [[1, 2], [1, 3], [1, 4], [2, 3], [2, 4], [3, 4]]
    num_possible_lists = len(possible_lists)
    num_assignments = num_possible_lists ** 7
    L = [0]*7 # List Assignment
    
    for i in range(num_assignments): # Iterate through all assignments
        temp = i
        for j in range(7):
            pv = num_possible_lists ** (6 - j)
            L[j] = possible_lists[temp // pv]
            temp = temp % pv
        if not proportional_list_coloring_exists_P(L, 2, 4):
            return L # Bad assignment
    return [] # All (2,4)-assignments L have a proportional L-coloring

if __name__ == "__main__":
    print(bad_2_4_assignment_P7())
\end{verbatim}

\end{document}